\documentclass[reqno]{amsart}
\usepackage{amssymb}
\usepackage{hyperref}

\title[ ]
{Pseudo semi B-Fredholm and  Generalized Drazin invertible operators Through Localized SVEP }

\author[ A. Tajmouati, M. Karmouni,  M.Abkari ]
{  A. Tajmouati, M. Karmouni,  M.Abkari }

\address{A. Tajmouati, M. Karmouni,   M.Abkari \, \newline
 Sidi Mohamed Ben Abdellah
 Univeristy
 Faculty of Sciences Dhar Al Mahraz Fez, Morocco.}
\email{abdelaziztajmouati@yahoo.fr}
\email{mohammed.karmouni@usmba.ac.ma}
\email{mbark.abkari@usmba.ac.ma}

\subjclass[2000]{47A53, 47A10, 47A11}
\keywords{Pseudo upper  semi B-Fredholm, pseudo lower semi B-Fredholm, left generalized Drazin, right generalized Drazin, Single-valued extension property, operator matrices.}

\newtheorem{theorem}{Theorem}[section]
\newtheorem{definition}{Definition}[section]
\newtheorem{remark}{Remark}

\newtheorem{proposition}{Proposition}[section]
\newtheorem{corollary}{Corollary}[section]
\newtheorem{example}{Example}

\begin{document}
\maketitle
\begin{abstract}
In this paper, we define and study the pseudo upper and lower semi B-Fredholm of  bounded operators in a Banach space. In particular, we prove equality
up to $S(T)$ between  the left generalized Drazin spectrum and the pseudo upper semi B-Fredholm spectrum,  $S(T )$
 is the set where $T$ fails to have  the SVEP. Also, we prove that both of the left and the right generalized Drazin operators are invariant under additive commuting  power finite rank perturbations and some perturbations for the pseudo upper and lower semi B-Fredholm operators are given.   As applications, we investigate some  classes of operators as the supercyclic and multiplier operators.
Furthermore, we investigate the left and the right generalized Drazin invertibility of upper triangular operator matrices by giving sufficient conditions which assure  that the left and the right  generalized Drazin spectrum or the pseudo upper and lower semi B-Fredholm of an upper triangular operator matrices  is the union of its diagonal entries  spectra.
\end{abstract}
\section{Introduction}
Throughout, $X$ denotes a complex Banach space and $\mathcal{B}(X)$ denotes the Banach algebra of all bounded linear
operators on $X$, we denote by $T^*$, $N(T)$,  $R(T)$, $ R^{\infty}(T)=\bigcap_{n\geq0}R(T^n)$,  $K(T)$, $ H_0(T)$,  $\rho(T)$, $\sigma_{ap}(T)$,   $\sigma_{su}(T)$,  $\sigma_{p}(T)$ and $\sigma(T)$,
 respectively the adjoint,  the null space, the range, the hyper-range, the analytic core, the quasi-nilpotent part, the resolvent set, the approximate point spectrum, the surjectivity spectrum, the point spectrum  and the spectrum of $T$.\\

 Next, let $T\in\mathcal{B}(X)$, $T$ has the single
valued extension property at $\lambda_{0}\in\mathbb{C}$ (SVEP) if
for every  open neighborhood   $U\subseteq \mathbb{C}$ of
$\lambda_{0}$, the only  analytic function  $f: U\longrightarrow
X$ which satisfies
 the equation $(T-zI)f(z)=0$ for all $z\in U$ is the function $f\equiv 0$. $T$ is said to have the SVEP if $T$ has the SVEP for
 every $\lambda\in\mathbb{C}$. Obviously, every operator $T\in\mathcal{B}(X)$ has the SVEP at every $\lambda\in\rho(T)$, then $T$ and $T^*$ have the SVEP at every point of the boundary  $\partial( \sigma(T))$ of the spectrum. In particular, $T$ and $T^*$ have the SVEP at every isolated point of the spectrum.  We denote by $S(T)$ the open set of $\lambda\in\mathbb{C}$ where $T$ fails to have SVEP at $\lambda$, and we say that  $T$ has SVEP if $S(T)=\emptyset$. It is easy to see that  $S(T)\subset\sigma_{p}(T)$, see \cite{Aie, lau}.\\
An operator $T \in\mathcal{B}(X)$ is said to be decomposable if for any open covering ${U_1, U_2}$ of the complex
plane $\mathbb{C}$,  there are two closed T-invariant subspaces $X_1$ and $X_2$ of $X$ such that
$X_1 + X_2 = X$ and $\sigma(T |X_k)\subset U_k$,  $k=1, 2$.
Note that $T$ is decomposable  implies that  $T$ and $T^*$ have the SVEP.\\
 A bounded linear operator is called an upper semi-Fredholm (resp, lower semi-Fredholm) if $dim N(T)<\infty \mbox{ and } R(T) \mbox{ is closed }$
(resp, $codim R(T) <\infty$). $T$ is semi-Fredholm if it is a lower or upper semi-Fredholm. The index of a semi Fredholm operator $T$ is defined by $ind(T)= dim N(T)- codim R(T)$

$T$ is a Fredholm operator if is  lower and upper semi-Fredholm, and T is called a Weyl operator if it is a Fredholm of index zero.\\
The upper, lower and semi-Fredholm  spectra of $T$  are  closed and defined by : $$\sigma_{uF}(T)=\{\lambda\in \mathbb{C}:\>\> T-\lambda I \mbox{  is not an  upper semi-Fredholm operator}\}$$
$$\sigma_{lF}(T)=\{\lambda\in \mathbb{C}:\>\> T-\lambda I \mbox{  is not a   lower  semi-Fredholm operator}\}$$
$$\sigma_{sF}(T)=\{\lambda\in \mathbb{C}:\>\> T-\lambda I \mbox{  is not a   semi-Fredholm operator}\}$$
The essential  and Weyl spectra  of $T$  are closed and  defined  by : $$\sigma_{e}(T)=\{\lambda\in \mathbb{C}:\>\> T-\lambda I \mbox{  is not a  Fredholm operator}\}$$
$$\sigma_{W}(T)=\{\lambda\in \mathbb{C}:\>\> T-\lambda I\>\> \mbox{ is not  a Weyl operator}\}$$
Now, consider a class of operators, introduced and studied by Berkani et al. in a series of papers
which extends the class of semi-Fredholm operators \cite{B1, B2, B3, AZ}. For every $T \in \mathcal{B}(X)$ and a nonnegative integer $n$, let us denote by $T_n$ the restriction of $T$ to $R(T^n)$ viewed as a map from the space $R(T^n)$ into itself (we set $T_0 = T$).\\
 An operator $T\in \mathcal{B}(X)$ is said to be upper (lower) semi B-Fredholm,  if for some integer $n\geq0$ the range $R(T^n)$ is closed and $T_n$ is an upper (lower) semi-Fredholm operator. Moreover, if $T_n$ is a Fredholm operator, then $T$ is called a B-Fredholm operator. A semi B-Fredholm operator is an upper or a lower semi B-Fredholm operator. It is easily seen that every nilpotent operator, as well as any idempotent bounded operator is B-Fredholm.
The class of B-Fredholm operators contains the class of Fredholm operators as a proper subclass.\\
Let $T\in \mathcal{B}(X)$, according to \cite[Proposition 2.6]{B1}, $T$ is a B-Fredholm operator if and only if there exists $(X_1,X_2)$ a pair of $T$- invariant closed
subspaces of $X$, such that  $X=X_{1}\oplus X_{2}$ and $T=T_{1}\oplus T_{2}$ where  $T_{1}$ is Fredholm and $T_{2}$ is nilpotent. The  upper, lower and B-Fredholm spectra are  defined by
$$\sigma_{uBF}(T)=\{\lambda\in \mathbb{C}:\>\> T-\lambda I \mbox{ is not upper  B-Fredholm} \}.$$
$$\sigma_{lBF}(T)=\{\lambda\in \mathbb{C}:\>\> T-\lambda I \mbox{ is not lower  B-Fredholm} \}.$$
$$\sigma_{BF}(T)=\{\lambda\in \mathbb{C}:\>\> T-\lambda I \mbox{ is not  B-Fredholm} \}.$$

Also $T\in \mathcal{B}(X)$ is a B-Weyl operator if there exists $(X_1,X_2)$ a pair of $T$- invariant closed
subspaces of $X$, such that  $X=X_{1}\oplus X_{2}$ and $T=T_{1}\oplus T_{2}$ where $T_{1}$ is Weyl operator and $T_{2}$ is nilpotent.
The  B-Weyl spectrum is defined by $$\sigma_{BW}(T)=\{\lambda\in \mathbb{C}:\>\> T-\lambda I  \mbox{ is not  B-Weyl}\}.$$

Let $T\in\mathcal{B}(X)$, the ascent of $T$ is defined by $a(T)=min\{p\in\mathbb{N}: N(T^p)=N(T^{p+1})\}$, if such $p$ does not exists we let $a(T)=\infty$. Analogously the descent of $T$ is $d(T)=min\{q\in\mathbb{N}: R(T^q)=R(T^{q+1})\}$, if such $q$ does not exists we let $d(T)=\infty$ \cite{LT}. It is well known that
if both $a(T)$ and $d(T)$ are finite then $a(T)=d(T)$ and we have the decomposition $X=R(T^p)\oplus N(T^p)$ where $p=a(T)=d(T)$.\\
The  descent and ascent spectra  of $T\in \mathcal{B}(X)$ are defined by :
    $$\sigma_{des}(T)=\{\lambda\in\mathbb{C}, T-\lambda I \mbox{   has  not finite   descent} \}$$
    $$\sigma_{acc}(T)=\{\lambda\in\mathbb{C}, T-\lambda I\mbox{  has   not  finite  ascent}\}$$

Let $T\in \mathcal{B}(X)$,
 $T$ is said to be a Drazin invertible if there exists a positive integer $k$ and an operator  $S\in\mathcal{B}(X)$ such that  $$ST=TS, \,\,\,T^{k+1}S=T^k\,\, \,\,and\,\,  S^2T=S.$$
Which is also equivalent to  the fact that $T=T_1\oplus T_2$; where $T_1$ is invertible  and $T_2$ is nilpotent.
It is well known  that $T$ is Drazin invertible if it has a finite ascent and descent.\\
Define two sets  $LD(X)$ and $RD(X)$  as \cite{Mbe2} :
$$LD(X)=\{ T\in\mathcal{B}(X): a(T)< \infty \mbox{ and } R(T^{a(T)+ 1}) \mbox{ is closed }\};$$
$$RD(X)=\{ T\in\mathcal{B}(X): d(T)< \infty \mbox{ and } R(T^{d(T)+ 1}) \mbox{ is closed }\}.$$
An operator $T\in \mathcal{B}(X)$ is said to be left (resp. right)  Drazin invertible if $T\in LD(X)$ (resp. $T\in RD(X)$).  The left and right  Drazin invertible spectra are defined by:
 $$\sigma_{lD}(T)=\{\lambda\in\mathbb{C},\,\, T-\lambda I\notin LD(X) \}$$
 $$\sigma_{rD}(T)=\{\lambda\in\mathbb{C},\,\, T-\lambda I\notin RD(X)\}$$
 And  we have  \cite{Mbe2, Mul, AZ}:
 $$\sigma_{D}(T)=\sigma_{lD}(T)\cup \sigma_{rD}(T)$$


The concept of Drazin invertible operators has been generalized by Koliha \cite{K}. In fact, $T\in \mathcal{B}(X)$ is generalized Drazin invertible if and only if $0\notin acc(\sigma(T))$ (  $acc(\sigma(T))$ is the set of all  points of accumulation of $\sigma(T)$), which is also equivalent to the fact that $T=T_1\oplus T_2$  where $T_1$ is invertible  and $T_2$ is quasi-nilpotent.  The generalized Drazin invertible spectrum defined by
$$\sigma_{gD}(T)=\{\lambda\in \mathbb{C}:\>\> T-\lambda I \mbox{ is not  generalized Drazin invertible} \}$$
In \cite{KMB}, the authors introduced  and studied  a new
concept of left and right generalized Drazin inverse of bounded operators as a generalization of left and right Drazin invertible operators. In fact,  an operator $T\in\mathcal{B}(X)$ is said to be left generalized Drazin invertible if $H_0(T)$ is closed and complemented with a subspace $M$ in $X$ such that $ T(M)$ is closed which equivalent to $T =T_1\oplus T_2$ such that $T_1$ is left invertible and $T_2$ is quasi-nilpotent see \cite[Proposition 3.2]{KMB}.\\
An operator $T\in\mathcal{B}(X)$ is said to be right generalized Drazin invertible if $K(T)$ is closed and complemented with a subspace $N$ in $X$ such that $ N\subseteq H_0(T)$  which equivalent to $T =T_1\oplus T_2$ such that $T_1$ is right  invertible and $T_2$ is quasi-nilpotent see \cite[Proposition 3.4]{KMB}.
The  left  and right generalized Drazin spectra   of $T\in \mathcal{B}(X)$ are  defined by:
    $$\sigma_{lgD}(T)=\{\lambda\in\mathbb{C},\,\, T-\lambda I \mbox{  is  not left  generalized   Drazin} \}$$
    $$\sigma_{rgD}(T)=\{\lambda\in\mathbb{C},\,\, T-\lambda I \mbox{ is  not right  generalized  Drazin} \}$$
     From \cite{KMB}, we have:
    $$\sigma_{gD}(T)=\sigma_{lgD}(T)\cup \sigma_{rgD}(T)$$
    $$\sigma_{rgD}(T)\subset\sigma_{rD}(T)$$
     $$\sigma_{lgD}(T)\subset\sigma_{lD}(T)$$
    $$\lambda \in \sigma_{lgD}(T) \Longleftrightarrow \lambda \in acc(\sigma_{ap}(T))$$
    $$\lambda \in \sigma_{rgD}(T) \Longleftrightarrow \lambda \in acc(\sigma_{su}(T))$$

    This paper is organized as follows. In section 2 and 3, we introduce  and study the class of pseudo upper semi B-Fredholm and pseudo lower semi B-Fredholm, and we show that the  pseudo upper semi B-Fredholm and pseudo lower semi B-Fredholm spectra, for a bounded linear  operator on a Banach space, are compact in the complex plane. Also, we prove equality up to $S(T)$ between  the left generalized Drazin
    spectrum and the pseudo upper semi B-Fredholm spectrum.  Some applications are given in section 4. In section 5,  we prove that the left and the right  generalized Drazin spectra of an operator are invariant  under additive commuting power finite rank perturbations. Some sufficient conditions are given to assure  that  the pseudo upper semi B-Fredholm,  pseudo lower semi B-Fredholm and pseudo B-Fredholm are stable under additive commuting power finite rank and nilpotent  perturbations. In section 6, we investigate the left and the right generalized Drazin spectra of upper triangular operator matrices
     $M_C=\begin{pmatrix}
A & B \\
0 & C \\
\end{pmatrix}$,  where $A\in\mathcal{B}(X)$, $B\in\mathcal{B}(Y)$ and $C\in\mathcal{B}(Y,X)$. We prove that $\sigma_{*}(M_0)=\sigma_{*}(A)\cup\sigma_{*}(B)$,
 $\sigma_{*} \in\{\sigma_{lgD}, \sigma_{rgD} \}$ and we give sufficient  conditions on $A$ (resp. on $B$) which ensure  the equality
 $\sigma_{lgD}(M_C)=\sigma_{lgD}(A)\cup\sigma_{lgD}(B)$ (resp. $\sigma_{rgD}(M_C)=\sigma_{rgD}(A)\cup\sigma_{rgD}(B)$).

    \section{Preliminaries }
    More recently, B-Fredholm and B-Weyl operators were generalized to pseudo B-Fredholm and pseudo B-Weyl \cite{BO}, \cite{ZZ}. Precisely,
$T$ is a pseudo B-Fredholm operator if there exists $(X_1,X_2)$ a pair of $T$- invariant closed
subspaces of $X$, such that  $X=X_{1}\oplus X_{2}$ and $T=T_{1}\oplus T_{2}$ where $T_{1}=T_{\shortmid X_{1}}$ is a Fredholm  operator and $T_{2}=T_{\shortmid X_{2}}$ is a quasi-nilpotent operator. The  pseudo B-Fredholm spectrum is defined by $$\sigma_{pBF}(T)=\{\lambda\in \mathbb{C}:\>\> T-\lambda I \mbox{ is  not  pseudo B-Fredholm} \}.$$
An operator $T$ is a pseudo B-Weyl operator if there exists $(X_1,X_2)$ a pair of $T$- invariant closed
subspaces of $X$, such that  $X=X_{1}\oplus X_{2}$ and $T=T_{1}\oplus T_{2}$ where $T_{1}=T_{\shortmid X_{1}}$  is a Weyl operator and $T_{2}=T_{\shortmid X_{2}}$ is a  quasi-nilpotent operator. The  pseudo B-Weyl spectrum is defined by $$\sigma_{pBW}(T)=\{\lambda\in \mathbb{C}:\>\> T-\lambda I \mbox{ is  not   pseudo B-Weyl} \}.$$
$\sigma_{pBW}(T)$ and $\sigma_{pBF}(T)$ is not necessarily non empty. For example, the quasi- nilpotent operator has empty pseudo B-Weyl  and B-Fredholm spectrum. Evidently $\sigma_{pBF}(T)\subset \sigma_{pBW}(T)\subset\sigma(T)$.\\
In the following, we define the  pseudo upper  semi B-Fredholm, pseudo lower semi B-Fredholm and pseudo semi B-Fredholm of a bounded
operator as a generalization of  semi B-Fredholm and we give some fundamental results concerning these operators.
\begin{definition}
An operator $T\in \mathcal{B}(X)$ is said to be  pseudo upper semi B-Fredholm if  there exists  two $T$-invariant  closed subspaces $X_{1}$ and $X_{2}$ of $X$  such that $X=X_1\oplus X_2$ and $T_{\shortmid X_{1}}$ is upper semi-Fredholm operator and $T_{\shortmid X_{2}}$ is quasi-nilpotent.
\end{definition}

\begin{definition}
An operator $T\in \mathcal{B}(X)$ is said to be  pseudo lower semi B-Fredholm if  there exists  two $T$-invariant  closed subspaces $X_{1}$ and $X_{2}$ of $X$  such that $X=X_1\oplus X_2$ and $T_{\shortmid X_{1}}$ is lower semi-Fredholm operator and $T_{\shortmid X_{2}}$ is quasi-nilpotent.
\end{definition}
\begin{definition}
 We say that $T\in\mathcal{B}(X)$ is pseudo semi B-Fredholm if  $T$ is  pseudo lower semi B-Fredholm  or  pseudo upper semi B-Fredholm.
 \end{definition}
It is clear that  $T$ is a pseudo B-Fredholm  operator if and only if $T$  is a pseudo lower semi B-Fredholm operator   and  pseudo upper semi B-Fredholm operator .\\
The  pseudo upper semi  B-Fredholm, pseudo lower  semi B-Fredholm  and pseudo semi B-Fredholm spectra are defined by $$\sigma_{pBuF}(T)=\{\lambda\in \mathbb{C}:\>\> T-\lambda I \mbox{ is not  a pseudo upper semi B-Fredholm} \}$$
$$\sigma_{pBlF}(T)=\{\lambda\in \mathbb{C}:\>\> T-\lambda I \mbox{ is not  a pseudo lower semi B-Fredholm} \}$$
$$\sigma_{pBsF}(T)=\{\lambda\in \mathbb{C}:\>\> T-\lambda I\mbox{ is not  a pseudo semi B-Fredholm} \}$$
Therefore, $$\sigma_{pBF}(T)=\sigma_{pBuF}(T)\cup\sigma_{pBlF}(T)$$ and $$\sigma_{pBsF}(T)=\sigma_{pBuF}(T)\cap\sigma_{pBlF}(T)$$
It is easy to see that $T$ is pseudo upper semi B-Fredholm if and only if $T^*$ is  pseudo  lower semi  B-Fredholm. Then:
$$\sigma_{pBuF}(T)=\sigma_{pBlF}(T^*)   \mbox{  and  } \sigma_{pBF}(T)=\sigma_{pBF}(T^*)$$
$\sigma_{pBsF}(T)$, $\sigma_{pBuF}(T)$ and $\sigma_{pBlF}(T)$ are  not necessarily non empty. For example, the quasi-nilpotent operator has empty pseudo upper semi B-Fredholm, pseudo lower semi B-Fredholm  and pseudo semi B-Fredholm spectrum.

\begin{example}\label{zz}
Let $T_1$ be defined  on $l^2(\mathbb{N})$ by :
\begin{center}
$T_0(x_1,x_2,....)=(x_1,0,x_2,0,x_3,0,....)$
\end{center}
$T_0$ is injective with closed range of infinite-codimension.\\
Consider the operator $T_2$ defined on $l^2(\mathbb{N})$ as: $T_2(x_1,x_2,....)=(x_2/2, x_3/3,.....)$.\\$T_2$ is  a quasi-nilpotent operator.
We have $T=T_0\oplus T_2$ is a pseudo upper semi B-Fredholm operator. Note that $0\in\sigma_{pBuF}(T_0)$, but $0\notin\sigma_{pBlF}(T_0)$.
\end{example}

\begin{example}
Let $T_1$ be defined  on $l^2(\mathbb{N})$ by :
\begin{center}
$T_1(x_1,x_2,....)=(x_2,x_3,....)$
\end{center}
 is surjective, but not injective, then is a lower semi-Fredholm operator. Let $T=T_1\oplus T_2$,  $T_2$ be as in Example \ref{zz}. Then $T$ is a pseudo lower semi B-Fredholm operator.
\end{example}

\section{ The class of pseudo semi B-Fredholm  operators}
Denote the open disc  centered at $\lambda_{0}$ with radius $\varepsilon$ in $\mathbb{C}$ by $D(\lambda_{0},\epsilon)$ and $D^*(\lambda_{0},\epsilon)=D(\lambda_{0},\epsilon)\backslash\{\lambda_0\}$.\\
The following Theorem establishes that if $T$ is a pseudo semi B-Fredholm operator, then $\lambda I -T$ is
 semi Fredholm in  an open punctured neighborhood of $0.$
\begin{theorem}\label{oo}
Let $T\in\mathcal{B}(X)$  a pseudo  semi B-Fredholm, then there exists a constant $\epsilon>0$ such that
$\lambda I -T$ is  semi Fredholm for all $\lambda\in \mathbb{D}^{*}(0,\epsilon).$
\end{theorem}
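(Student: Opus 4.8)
The plan is to use the definition of pseudo semi B-Fredholm to split $X = X_1 \oplus X_2$ with $T = T_1 \oplus T_2$, where $T_1 = T_{\shortmid X_1}$ is semi-Fredholm (upper or lower) and $T_2 = T_{\shortmid X_2}$ is quasi-nilpotent, and then to perturb each summand separately. Since the two cases are dual under passing to adjoints, I would treat, say, the pseudo upper semi B-Fredholm case and obtain the lower case by the identity $\sigma_{pBuF}(T)=\sigma_{pBlF}(T^*)$. For $\lambda \in \mathbb{C}$ we have $\lambda I - T = (\lambda I_{X_1} - T_1) \oplus (\lambda I_{X_2} - T_2)$, so it suffices to control each direct summand in a punctured neighborhood of $0$.

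For the summand on $X_2$: since $T_2$ is quasi-nilpotent, $\sigma(T_2) = \{0\}$, so $\lambda I_{X_2} - T_2$ is invertible — in particular semi-Fredholm with index $0$ — for every $\lambda \neq 0$; no smallness of $\lambda$ is needed here. For the summand on $X_1$: $T_1$ is upper semi-Fredholm, and the upper semi-Fredholm operators form an open subset of $\mathcal{B}(X_1)$ (stability of semi-Fredholmness under small perturbations — the classical punctured neighborhood theorem, cf. the references \cite{B1,AZ} cited in the excerpt). Hence there is $\epsilon > 0$ such that $\lambda I_{X_1} - T_1$ is upper semi-Fredholm whenever $|\lambda| < \epsilon$. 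Combining the two: for $0 < |\lambda| < \epsilon$, $\lambda I - T$ is the direct sum of an upper semi-Fredholm operator on $X_1$ and an invertible operator on $X_2$, hence is upper semi-Fredholm on $X = X_1 \oplus X_2$ (direct sums of (upper) semi-Fredholm operators are (upper) semi-Fredholm, with $N(\lambda I - T) = N(\lambda I_{X_1}-T_1) \oplus N(\lambda I_{X_2}-T_2)$ finite-dimensional and $R(\lambda I - T)$ closed). In particular $\lambda I - T$ is semi-Fredholm for all $\lambda \in \mathbb{D}^*(0,\epsilon)$.

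The main technical point to justify carefully is that a direct sum decomposition $X = X_1 \oplus X_2$ into closed $T$-invariant subspaces genuinely reduces questions about $\lambda I - T$ to the two pieces — i.e. that closedness of $R(\lambda I_{X_1}-T_1)$ together with invertibility on $X_2$ yields closedness of $R(\lambda I - T)$ in $X$, and that the dimension/codimension counts add. This is routine once one notes that the projection onto $X_1$ along $X_2$ is bounded (the decomposition being topological), but it is the only place where one uses more than purely algebraic facts. Everything else reduces to the openness of the (upper, lower) semi-Fredholm classes in $\mathcal{B}(X_i)$ and the spectral triviality of the quasi-nilpotent part; the lower semi B-Fredholm case follows verbatim with "upper" replaced by "lower", or by duality, and the theorem as stated (plain "semi Fredholm") is the disjunction of the two.
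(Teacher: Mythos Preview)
Your proof is correct and follows essentially the same approach as the paper: decompose $T = T_1 \oplus T_2$, use openness of the semi-Fredholm class to get an $\epsilon$-disc on which $\lambda I_{X_1}-T_1$ stays semi-Fredholm, use quasi-nilpotence to make $\lambda I_{X_2}-T_2$ invertible for $\lambda\neq 0$, and combine on the direct sum. The paper's version is slightly terser (it treats ``semi-Fredholm'' generically rather than splitting into upper/lower or invoking duality, and it singles out the degenerate case $X_1=\{0\}$), but the argument is identical in substance.
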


\begin{proof}
If $T$ is a pseudo  semi B-Fredholm operator, then there exists two closed $T$-invariant subspaces $X_1$ and $X_2$ such that $X=X_1\oplus X_2; T_{|X_1}$ is semi Fredholm, $T_{|X_2}$ is quasi-nilpotent and $T=T_{|X_1}\oplus T_{|X_2}.$\\
 If $X_{1}=\{0\}$, then $T$ is quasi-nilpotent,  then for all $\lambda \neq 0$ $\lambda I - T$ is invertible, hence $T-\lambda I$ is semi Fredholm for all $\lambda\neq0.$\\
 If $X_{1}\neq\{0\}$, thus $T_{\shortmid X_{1}}$ is  semi Fredholm,  then there exists  $\epsilon > 0$ such that $(T-\lambda I)_{\shortmid X_{1}} $ is  semi Fredholm for all $\lambda\in D(0,\epsilon)$.\\
As $T_{|X_2}$  is quasi-nilpotent, then $(T-\lambda I)_{\shortmid X_{2}}$ is invertible for all $\lambda\neq0$, hence $(T-\lambda I)_{\shortmid X_{2}}$
is semi Fredholm.
 Therefore, $(T-\lambda I)_{\shortmid X_{1}}$ is semi Fredholm  for all $\lambda\in \mathbb{D}(0,\epsilon)$ and $(T-\lambda I)_{\shortmid X_{2}}$ is semi Fredholm for all $\lambda\neq0$, hence  $(T-\lambda I)$ is  semi Fredholm for all $\lambda\in \mathbb{D}^{*}(0,\epsilon).$
\end{proof}

From  Theorem  \ref{oo},  we derive the following corollary.
\begin{corollary}
Let $T\in\mathcal{B}(X)$, then $\sigma_{pBuF}(T), \sigma_{pBlF}(T), \sigma_{pBsF}(T)$ are compact subsets of $\mathbb{C}.$\\
Moreover $\sigma_{uF}(T)\setminus\sigma_{pBuF}(T)$,  $\sigma_{lF}(T)\setminus\sigma_{pBlF}(T)$,  $\sigma_{sF}(T)\setminus\sigma_{pBsF}(T)$
consist of at most countably many isolated points.
\end{corollary}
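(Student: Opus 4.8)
The plan is to deduce everything from a single perturbation statement, obtained by the same decomposition used in Theorem \ref{oo} but keeping track of the ``upper'' (resp. ``lower'') part: \emph{if $T-\lambda_0 I$ is pseudo upper semi B-Fredholm, then $T-\lambda I$ is upper semi-Fredholm for all $\lambda$ in some punctured disc $\mathbb{D}^*(\lambda_0,\epsilon)$.} Indeed, write $X=X_1\oplus X_2$ with $X_1,X_2$ closed and $T$-invariant, $R_1:=(T-\lambda_0 I)_{|X_1}$ upper semi-Fredholm and $R_2:=(T-\lambda_0 I)_{|X_2}$ quasi-nilpotent. Since $I$ respects the decomposition, $T-\lambda I=(R_1-(\lambda-\lambda_0)I_{X_1})\oplus(R_2-(\lambda-\lambda_0)I_{X_2})$ for every $\lambda$. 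Using that the class of upper semi-Fredholm operators is open, I would choose $\epsilon>0$ so that $R_1-(\lambda-\lambda_0)I_{X_1}$ is upper semi-Fredholm on $X_1$ whenever $|\lambda-\lambda_0|<\epsilon$; and since $\sigma(R_2)=\{0\}$, the operator $R_2-(\lambda-\lambda_0)I_{X_2}$ is invertible on $X_2$ for every $\lambda\neq\lambda_0$. It then remains to check that a direct sum, along the topological decomposition $X=X_1\oplus X_2$, of an upper semi-Fredholm operator and an invertible operator is again upper semi-Fredholm: the kernel is the (finite-dimensional) kernel of the first summand, and the range is $R(R_1-(\lambda-\lambda_0)I_{X_1})\oplus X_2$, which is closed because the coordinate projections associated to $X=X_1\oplus X_2$ are bounded. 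Hence $T-\lambda I$ is upper semi-Fredholm for $0<|\lambda-\lambda_0|<\epsilon$. The lower and the (upper-or-lower) semi-Fredholm versions follow verbatim, replacing finite-dimensionality of the kernel by finite codimension of the range where appropriate.

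Granting this, compactness is immediate. Each of $\sigma_{pBuF}(T),\sigma_{pBlF}(T),\sigma_{pBsF}(T)$ is contained in $\sigma(T)$ (if $T-\lambda I$ is invertible it is trivially pseudo upper/lower/semi B-Fredholm, taking $X_1=X$, $X_2=\{0\}$), hence bounded. To get closedness I would show the complement is open: if $\lambda_0\notin\sigma_{pBuF}(T)$, the perturbation statement yields a punctured disc $\mathbb{D}^*(\lambda_0,\epsilon)$ on which $T-\lambda I$ is upper semi-Fredholm, hence pseudo upper semi B-Fredholm; together with $\lambda_0$ itself this gives $\mathbb{D}(\lambda_0,\epsilon)\cap\sigma_{pBuF}(T)=\emptyset$. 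Thus $\sigma_{pBuF}(T)$ is closed and bounded, so compact; likewise $\sigma_{pBlF}(T)$, and then $\sigma_{pBsF}(T)=\sigma_{pBuF}(T)\cap\sigma_{pBlF}(T)$ is compact as an intersection of compacts (or again directly by the same argument).

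For the ``moreover'' part, take $\lambda_0\in\sigma_{uF}(T)\setminus\sigma_{pBuF}(T)$. Since $T-\lambda_0 I$ is pseudo upper semi B-Fredholm, the perturbation statement gives $\epsilon>0$ with $\mathbb{D}^*(\lambda_0,\epsilon)\cap\sigma_{uF}(T)=\emptyset$, so $\lambda_0$ is an isolated point of $\sigma_{uF}(T)$, hence a fortiori an isolated point of the smaller set $\sigma_{uF}(T)\setminus\sigma_{pBuF}(T)$. Finally I would invoke the elementary topological fact that, in the second-countable space $\mathbb{C}$, a subset all of whose points are isolated in it is at most countable: to each such point assign a basic open set, from a fixed countable base, meeting $\sigma_{uF}(T)$ in exactly that point, and observe that this assignment is injective. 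The same reasoning applies to $\sigma_{lF}(T)\setminus\sigma_{pBlF}(T)$ (using pseudo lower semi B-Fredholm) and to $\sigma_{sF}(T)\setminus\sigma_{pBsF}(T)$ (where $T-\lambda_0 I$ pseudo semi B-Fredholm means it is pseudo upper \emph{or} pseudo lower semi B-Fredholm, and in either case $T-\lambda I$ is semi-Fredholm on the punctured disc).

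The only genuinely non-formal points are the direct-sum stability of the (upper/lower/semi-)Fredholm property along a topological direct sum — which is where boundedness of the associated projections enters, to get closedness of the range — and the fact that a set of isolated points in a separable metric space is at most countable; everything else is bookkeeping around the decomposition of Theorem \ref{oo}. I expect the direct-sum stability verification to be the main place where a little care is needed, since one must be sure the decomposition $X=X_1\oplus X_2$ is a genuine topological (not merely algebraic) direct sum so that the projections are continuous.
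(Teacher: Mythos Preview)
Your proof is correct and follows the paper's intended approach: the corollary is stated there without proof, as a direct consequence of Theorem \ref{oo}. Your argument is the natural fleshing-out of that indication, with the minor addition that you explicitly establish the upper- and lower-specific refinements of the perturbation result, whereas Theorem \ref{oo} as stated treats only the undifferentiated semi-Fredholm case (though its proof adapts verbatim).
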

Since $\sigma_{pBuF}(T)\subset\sigma_{uBF}(T)\subset\sigma_{uF}(T)$ and $\sigma_{pBlF}(T)\subset\sigma_{lBF}(T)\subset\sigma_{lF}(T)$
the following corollary hold:
\begin{corollary}
Let $T\in\mathcal{B}(X)$, then  $\sigma_{uBF}(T)\setminus\sigma_{pBuF}(T)$, $ \sigma_{lBF}(T)\setminus\sigma_{pBlF}(T)$
consist of  at most countably many isolated points.
\end{corollary}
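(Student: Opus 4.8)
The plan is to obtain this corollary with no new analytic input: it follows formally from the preceding corollary (where the sets $\sigma_{uF}(T)\setminus\sigma_{pBuF}(T)$, $\sigma_{lF}(T)\setminus\sigma_{pBlF}(T)$ were already shown to be at most countable sets of isolated points, via Theorem \ref{oo}) together with the displayed inclusions $\sigma_{pBuF}(T)\subset\sigma_{uBF}(T)\subset\sigma_{uF}(T)$ and $\sigma_{pBlF}(T)\subset\sigma_{lBF}(T)\subset\sigma_{lF}(T)$. The only content is the observation that the property ``countable set of isolated points'' is inherited by subsets.

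First I would record the inclusions of the difference sets. From $\sigma_{pBuF}(T)\subset\sigma_{uBF}(T)\subset\sigma_{uF}(T)$ one gets
$$\sigma_{uBF}(T)\setminus\sigma_{pBuF}(T)\subseteq\sigma_{uF}(T)\setminus\sigma_{pBuF}(T),$$
and likewise $\sigma_{lBF}(T)\setminus\sigma_{pBlF}(T)\subseteq\sigma_{lF}(T)\setminus\sigma_{pBlF}(T)$. By the foregoing corollary the two right-hand sides are at most countable and consist of points that are isolated in them.

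Then I would invoke the two elementary stability facts for a subset $A\subseteq B$ of $\mathbb{C}$: (i) if $B$ is at most countable, so is $A$; and (ii) if every point of $B$ is isolated in $B$, i.e. for each $x\in B$ there is an open disc $D$ with $D\cap B=\{x\}$, then the same disc gives $D\cap A\subseteq D\cap B=\{x\}$, hence $D\cap A=\{x\}$, so $x$ is isolated in $A$. Applying (i) and (ii) with $B=\sigma_{uF}(T)\setminus\sigma_{pBuF}(T)$ and $A=\sigma_{uBF}(T)\setminus\sigma_{pBuF}(T)$, and then with the lower semi-Fredholm analogues, yields the claim. There is no genuine obstacle; the only point requiring a little care is to fix the meaning of ``isolated point'' as isolated in the subspace topology of the set in question, which is precisely the reading under which step (ii) goes through.
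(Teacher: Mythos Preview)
Your proposal is correct and matches the paper's approach exactly: the paper simply records the inclusions $\sigma_{pBuF}(T)\subset\sigma_{uBF}(T)\subset\sigma_{uF}(T)$ and $\sigma_{pBlF}(T)\subset\sigma_{lBF}(T)\subset\sigma_{lF}(T)$ and declares the corollary to follow from the preceding one, which is precisely the subset argument you spell out.
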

Recall that $T\in\mathcal{B}(X)$ is said to be Kato operator or
semi-regular if $R(T)$ is closed
and $N(T)\subseteq R^{\infty}(T)$. Denote by $\rho_{K}(T)$ :
$\rho_{K}(T)=\{\lambda\in\mathbb{C}: T-\lambda I\mbox{  is Kato }
\}$ the Kato resolvent  and
$\sigma_{K}(T)=\mathbb{C}\backslash\rho_{K}(T)$ the Kato spectrum
of $T$.
An operator $T\in \mathcal{B}(X)$ admit a generalized Kato decomposition, abbreviated as GKD if  there exists  two $T$-invariant  closed subspaces $X_{1}$ and $X_{2}$ of $X$  such that $X=X_1\oplus X_2$ and $T_{\shortmid X_{1}}$ is semi-regular (or a Kato) operator and $T_{\shortmid X_{2}}$ is quasi-nilpotent.
It is easy to see that every pseudo semi B-Fredholm is a pseudo Fredholm. According to \cite[Theorem 2.2]{JZ}, the following proposition hold.

\begin{proposition}
Let $T\in\mathcal{B}(X)$  a pseudo  semi B-Fredholm operator.
Then there exists a constant  $\epsilon > 0$ such that   $T-\lambda I$ is a Kato operator, for all $\lambda\in \mathbb{D}^{*}(0,\epsilon)$.
\end{proposition}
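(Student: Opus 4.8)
The plan is to deduce the proposition from \cite[Theorem 2.2]{JZ}, which asserts that an operator admitting a generalized Kato decomposition is a Kato (semi-regular) operator on a suitable punctured neighbourhood of $0$. Consequently the only thing that really needs to be checked is the remark preceding the statement, namely that a pseudo semi B-Fredholm operator admits a GKD (equivalently, that $T$ is a pseudo Fredholm operator).

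First I would unwind the hypothesis: since $T$ is pseudo semi B-Fredholm, there are $T$-invariant closed subspaces $X_1,X_2$ with $X=X_1\oplus X_2$, $T=T_{\shortmid X_1}\oplus T_{\shortmid X_2}$, where $S:=T_{\shortmid X_1}$ is upper or lower semi-Fredholm and $T_{\shortmid X_2}$ is quasi-nilpotent. Next I would invoke the classical Kato decomposition theorem for semi-Fredholm operators applied to $S$: there are $S$-invariant closed subspaces $X_{11},X_{12}$ of $X_1$ with $X_1=X_{11}\oplus X_{12}$ and $S=S_{\shortmid X_{11}}\oplus S_{\shortmid X_{12}}$, where $S_{\shortmid X_{11}}$ is semi-regular and $S_{\shortmid X_{12}}$ is nilpotent (in fact $X_{12}$ is finite-dimensional, hence automatically complemented). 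Regrouping the summands gives a decomposition $X=X_{11}\oplus(X_{12}\oplus X_2)$ into closed $T$-invariant subspaces with $T=T_{\shortmid X_{11}}\oplus\bigl(T_{\shortmid X_{12}}\oplus T_{\shortmid X_2}\bigr)$, in which $T_{\shortmid X_{11}}=S_{\shortmid X_{11}}$ is Kato. Finally I would observe that the operator $T_{\shortmid X_{12}}\oplus T_{\shortmid X_2}$ on $X_{12}\oplus X_2$ is quasi-nilpotent, since its spectrum equals $\sigma(S_{\shortmid X_{12}})\cup\sigma(T_{\shortmid X_2})=\{0\}$ (a nilpotent operator is quasi-nilpotent, and the spectrum of a direct sum is the union of the spectra of the summands). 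Hence $(X_{11},\,X_{12}\oplus X_2)$ is a GKD for $T$, so $T$ is pseudo Fredholm, and \cite[Theorem 2.2]{JZ} produces the required $\epsilon>0$ with $T-\lambda I$ Kato for all $\lambda\in\mathbb{D}^{*}(0,\epsilon)$.

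I expect the only genuinely delicate point to be the bookkeeping: making sure that each decomposition used is a topological direct sum of closed invariant subspaces, so that the reassembled pieces ``semi-regular $\oplus$ quasi-nilpotent'' and ``nilpotent $\oplus$ quasi-nilpotent'' behave as claimed; this is ensured because the Kato decomposition of a semi-Fredholm operator is topological and the extra summand $X_{12}$ is finite-dimensional. If one prefers to avoid citing \cite{JZ}, the same conclusion follows self-containedly: the functions $\lambda\mapsto\dim N(S-\lambda I)$ and $\lambda\mapsto\mathrm{codim}\,R(S-\lambda I)$ of the semi-Fredholm operator $S$ are constant on a punctured disc $\mathbb{D}^{*}(0,\epsilon)$, whence $S-\lambda I$ is Kato there; $T_{\shortmid X_2}-\lambda I$ is invertible, hence Kato, for every $\lambda\neq0$; and since a finite direct sum of Kato operators is Kato, $T-\lambda I=(S-\lambda I)\oplus(T_{\shortmid X_2}-\lambda I)$ is Kato on $\mathbb{D}^{*}(0,\epsilon)$.
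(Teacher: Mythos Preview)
Your proposal is correct and follows essentially the same route as the paper: the paper's argument is the one-line observation (made just before the proposition) that every pseudo semi B-Fredholm operator is pseudo Fredholm, i.e.\ admits a GKD, after which \cite[Theorem 2.2]{JZ} gives the conclusion. You have simply spelled out the ``easy to see'' step via the classical Kato decomposition of the semi-Fredholm summand, which is the natural justification.
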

As a consequence of the preceding Proposition, we have:
\begin{corollary}
Let $T\in\mathcal{B}(X)$, $\sigma_{K}(T)\setminus\sigma_{pBsF}(T)$
consist of at most countably many isolated points.
\end{corollary}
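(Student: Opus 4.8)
The plan is to show that every point $\lambda_0$ of $\sigma_{K}(T)\setminus\sigma_{pBsF}(T)$ is an isolated point of $\sigma_{K}(T)$, and then to deduce that there are at most countably many such points from the separability of $\mathbb{C}$. This is the exact analogue, for the Kato spectrum, of the arguments used above to treat $\sigma_{uF}(T)\setminus\sigma_{pBuF}(T)$ and $\sigma_{uBF}(T)\setminus\sigma_{pBuF}(T)$.

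First I would fix $\lambda_0\in\sigma_{K}(T)\setminus\sigma_{pBsF}(T)$. Since $\lambda_0\notin\sigma_{pBsF}(T)$, the operator $S:=T-\lambda_0 I$ is pseudo semi B-Fredholm, so the preceding Proposition applied to $S$ yields a constant $\epsilon>0$ such that $S-\mu I$ is a Kato operator for every $\mu\in\mathbb{D}^{*}(0,\epsilon)$. Because $S-\mu I=T-(\lambda_0+\mu)I$, this says precisely that $\lambda_0+\mu\in\rho_{K}(T)$ whenever $0<|\mu|<\epsilon$, that is, $\mathbb{D}^{*}(\lambda_0,\epsilon)\cap\sigma_{K}(T)=\emptyset$. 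Since $\lambda_0\in\sigma_{K}(T)$ by hypothesis, this shows $\lambda_0$ is an isolated point of $\sigma_{K}(T)$, hence a fortiori an isolated point of the smaller set $\sigma_{K}(T)\setminus\sigma_{pBsF}(T)$.

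It then remains to observe that a subset of $\mathbb{C}$ all of whose points are isolated in $\sigma_{K}(T)$ is at most countable: for each such $\lambda_0$ pick $\epsilon_{\lambda_0}>0$ with $\mathbb{D}^{*}(\lambda_0,\epsilon_{\lambda_0})\cap\sigma_{K}(T)=\emptyset$; the discs $\mathbb{D}(\lambda_0,\epsilon_{\lambda_0}/2)$ are then pairwise disjoint (if $\lambda_0\neq\lambda_1$ lay in a common such disc, the one of larger radius would contain the other centre, contradicting its isolation in $\sigma_{K}(T)$), and a family of pairwise disjoint nonempty open subsets of the separable space $\mathbb{C}$ is at most countable. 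Combining the two paragraphs gives the statement.

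There is no serious obstacle here; the only point requiring (trivial) care is that the cited Proposition must be applied to the translate $T-\lambda_0 I$ rather than to $T$ itself, and since its conclusion is a statement about $(T-\lambda_0 I)-\mu I$ one never has to decompose or otherwise manipulate a shifted quasi-nilpotent summand.
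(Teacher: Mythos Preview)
Your proof is correct and is precisely the argument the paper intends: the corollary is stated without proof as an immediate consequence of the preceding Proposition, and you have simply written out that deduction in full (apply the Proposition to the translate $T-\lambda_0 I$, conclude that each $\lambda_0\in\sigma_K(T)\setminus\sigma_{pBsF}(T)$ is isolated in $\sigma_K(T)$, then invoke separability). There is nothing to add or correct.
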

Define   $pBuW(X)$  and  $pBlW(X)$ :\\
 $T\in pBuW(X)$  if there exists  two $T$-invariant  closed subspaces $X_{1}$ and $X_{2}$ of $X$ where $X=X_1\oplus X_2$ and $T_{\shortmid X_{1}}$ is upper semi Fredholm of $ind(T_{\shortmid X_{1}})\leq 0$  and $T_{\shortmid X_{2}}$ is quasi-nilpotent.\\
$T\in pBlW(X)$ if there exists  two $T$-invariant  closed subspaces $X_{1}$ and $X_{2}$ of $X$ where $X=X_1\oplus X_2$ and $T_{\shortmid X_{1}}$ is lower semi Fredholm of $ind(T_{\shortmid X_{1}})\geq 0$ and $T_{\shortmid X_{2}}$ is quasi-nilpotent. The corresponding spectra of these sets  are defined by:
$$\sigma_{pBuW}(T)=\{\lambda\in\mathbb{C}, T-\lambda I\notin pBuW(X)\}$$
$$\sigma_{pBlW}(T)=\{\lambda\in\mathbb{C}, T-\lambda I\notin pBlW(X)\}$$
Then $\sigma_{pBuF}(T)\subseteq\sigma_{pBuW}(T)\subseteq\sigma_{lgD}(T)$ and $\sigma_{pBlF}(T)\subseteq\sigma_{pBlW}(T)\subseteq\sigma_{rgD}(T)$.

\begin{remark}
We have $\sigma_{pBuF}(T)\subset\sigma_{lgD}(T)$, this inclusion is proper. Indeed: Let $T$ be the
unilateral left shift operator defined on the Hilbert $l^2(\mathbb{N})$:
$$ L(x_{1},x_{2},....)=(x_{2},x_{3},...).$$
  Since $\sigma_{ap}(L)=\{\lambda\in \mathbb{C}; |\lambda|\leq 1\}$, then $\sigma_{lgD}(L)=acc(\sigma_{ap}(L))=\{\lambda\in \mathbb{C}; |\lambda|\leq 1\}$  Observe that $L$ is an upper semi-Fredholm operator, then $0\notin \sigma_{pBuF}(T)$.
  This shows that the inclusion $\sigma_{pBuF}(T)\subset\sigma_{lgD}(T)$ is proper. Then it is naturel to ask about the defect set $\sigma_{lgD}(T)\backslash\sigma_{pBuF}(T)$, where $T$ is a bounded operator.
\end{remark}
In the following theorem, we get a characterization of this defect set.

\begin{theorem}\label{t11}
Let $T\in\mathcal{B}(X)$. Then: $$\sigma_{lgD}(T)=\sigma_{pBuF}(T)\cup S(T)=\sigma_{pBuW}(T)\cup S(T)$$
\end{theorem}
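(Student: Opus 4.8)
The plan is to establish the circular chain of inclusions
$$\sigma_{pBuF}(T)\cup S(T)\subseteq\sigma_{pBuW}(T)\cup S(T)\subseteq\sigma_{lgD}(T)\subseteq\sigma_{pBuF}(T)\cup S(T),$$
which forces the three sets to coincide. The first inclusion is immediate from $\sigma_{pBuF}(T)\subseteq\sigma_{pBuW}(T)$, recorded just before the statement. For the second I would combine $\sigma_{pBuW}(T)\subseteq\sigma_{lgD}(T)$ (also recorded above) with $S(T)\subseteq\sigma_{lgD}(T)$: if $\lambda\notin\sigma_{lgD}(T)$ then, by the definition of left generalized Drazin invertibility, $H_{0}(T-\lambda I)$ is closed, and it is classical (see \cite{Aie}) that closedness of $H_{0}(T-\lambda I)$ implies that $T$ has the SVEP at $\lambda$; hence $\lambda\notin S(T)$. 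This disposes of everything except the genuinely substantive third inclusion.

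The core of the argument is to show $\sigma_{lgD}(T)\subseteq\sigma_{pBuF}(T)\cup S(T)$, i.e. that if $T-\lambda I$ is pseudo upper semi B-Fredholm and $T$ has the SVEP at $\lambda$, then $\lambda\notin\sigma_{lgD}(T)$. Replacing $T$ by $T-\lambda I$ I may assume $\lambda=0$. Write $X=X_{1}\oplus X_{2}$ with $X_{1},X_{2}$ closed and $T$-invariant, $T=T_{1}\oplus T_{2}$, $T_{1}=T_{\shortmid X_{1}}$ upper semi-Fredholm and $T_{2}=T_{\shortmid X_{2}}$ quasi-nilpotent. Since $X_{1}$ is a $T$-invariant subspace, the restriction $T_{1}$ inherits the SVEP at $0$: any analytic $X_{1}$-valued local solution of $(T_{1}-zI)f(z)=0$ is also an $X$-valued local solution of $(T-zI)f(z)=0$, so $f\equiv0$ by the SVEP of $T$ at $0$. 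Now I invoke the classical fact (\cite{Aie}) that an upper semi-Fredholm operator which has the SVEP at $0$ has finite ascent, hence is upper semi-Browder, so that $0\notin acc(\sigma_{ap}(T_{1}))$ (equivalently, $T_{1}-\mu I$ is bounded below for $\mu$ in a punctured disc around $0$). I expect this to be the main obstacle: pinning down the exact reference and the precise punctured-neighbourhood statement, and making sure the passage of the SVEP to the summand $T_{1}$ is correctly justified.

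It then remains to glue the pieces spectrally. Because $T=T_{1}\oplus T_{2}$ we have $\sigma_{ap}(T)=\sigma_{ap}(T_{1})\cup\sigma_{ap}(T_{2})$, and since $T_{2}$ is quasi-nilpotent, $\sigma_{ap}(T_{2})=\{0\}$ (the boundary of $\sigma(T_{2})=\{0\}$ is contained in $\sigma_{ap}(T_{2})$; the case $X_{2}=\{0\}$ is trivial). Hence $\sigma_{ap}(T)=\sigma_{ap}(T_{1})\cup\{0\}$, and since adjoining a single point does not change the set of accumulation points, $acc(\sigma_{ap}(T))=acc(\sigma_{ap}(T_{1}))$, which does not contain $0$ by the previous step. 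By the characterization $\mu\in\sigma_{lgD}(T)\Longleftrightarrow\mu\in acc(\sigma_{ap}(T))$ recalled in the introduction, we get $0\notin\sigma_{lgD}(T)$, which closes the chain of inclusions. All that is left is routine bookkeeping; the only two non-elementary ingredients are the SVEP facts ``$H_{0}$ closed $\Rightarrow$ SVEP'' and ``upper semi-Fredholm $+$ SVEP $\Rightarrow$ finite ascent / no clustering of $\sigma_{ap}$'', both standard and available in \cite{Aie}.
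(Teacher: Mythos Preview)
Your proof is correct and follows the same overall strategy as the paper's: set up the chain of inclusions, reduce the nontrivial one to showing that pseudo upper semi B-Fredholm together with SVEP at $\lambda$ forces $\lambda\notin\sigma_{lgD}(T)$, pass the SVEP to the restriction $T_{1}$, and exploit that an upper semi-Fredholm operator with the SVEP at $0$ has finite ascent. The only difference lies in the final gluing step. The paper invokes \cite[Theorem 2.1]{AZ} to conclude that $(T-\lambda I)_{\shortmid X_{1}}$ is left Drazin invertible, then further decomposes $X_{1}=X_{1}'\oplus X_{1}''$ into a bounded-below part and a nilpotent part, and recombines $X=X_{1}'\oplus(X_{1}''\oplus X_{2})$ to exhibit $T-\lambda I$ explicitly as (left invertible)$\oplus$(quasi-nilpotent), appealing to the structural characterization of left generalized Drazin invertibility. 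You instead use the spectral characterization $\sigma_{lgD}(T)=acc(\sigma_{ap}(T))$: from upper semi-Browder you get $0\notin acc(\sigma_{ap}(T_{1}))$, and since $\sigma_{ap}(T)=\sigma_{ap}(T_{1})\cup\{0\}$ differs from $\sigma_{ap}(T_{1})$ by at most one point, $0\notin acc(\sigma_{ap}(T))$. Your route is a little more economical, avoiding the auxiliary decomposition of $X_{1}$; the paper's route has the merit of producing the left generalized Drazin decomposition of $T-\lambda I$ explicitly.
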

\begin{proof}
$S(T)\subseteq\sigma_{lgD}(T)$ and $\sigma_{pBuF}(T)\subseteq\sigma_{lgD}(T)$, hence $\sigma_{pBuF}(T)\cup S(T)\subseteq \sigma_{lgD}(T)$. Indeed:
Let $\lambda\notin\sigma_{lgD}(T)$ then $T-\lambda I$ is a left generalized Drazin invertible,  then $H_0(T-\lambda I)$ is closed,  by \cite[Theorem 1.7]{AP} $T$ has the SVEP at $\lambda$, hence $S(T)\subseteq\sigma_{lgD}(T)$.
Conversely : Let $\lambda \notin \sigma_{pBuF}(T)\cup S(T)$, then $T-\lambda I$ is a pseudo upper semi B-Fredholm, then there exists two closed subspaces $T$-invariant $X_1$
and $X_2$ such that $X=X_1\oplus X_2$, $(T-\lambda I)_{|X_1}$ is upper semi Fredholm and $(T-\lambda I)_{|X_2}$ is quasi-nilpotent. Since $T$ has SVEP at $\lambda$ then $T_{|X_1}$ and $T_{|X_2} $ have the  SVEP at $\lambda.$
Since  $(T-\lambda I)_{|X_1}$ is upper semi Fredholm, then $(T-\lambda I)_{|X_1}$ is upper semi B-Fredholm, since $T_{|X_1}$  has  SVEP at $\lambda,$
by \cite[Theorem 2.1]{AZ}, we have $(T-\lambda I)_{|X_1}$ is left Drazin invertible, hence $X_1=X_{1}^{'}\oplus X_{1}^{''}$
with $(T-\lambda I)_{|X_{1}^{'}}$ is invertible and $(T-\lambda I)_{|X_{1}^{''}}$ is nilpotent, thus $X=X_{1}^{'}\oplus X_{1}^{''}\oplus X_2$
with $(T-\lambda I)_{|X_{1}^{'}}$ is invertible and $(T-\lambda I)_{| X_{1}^{''}\oplus X_2}$ is quasi-nilpotent, therefore  $(T-\lambda I)$ is left generalized Drazin invertible.
\end{proof}
By duality we get a similar result for the right Drazin invertible spectrum.
\begin{theorem}\label{t22}
Let $T\in\mathcal{B}(X)$. Then:
    $$\sigma_{rgD}(T)=\sigma_{pBlF}(T)\cup S(T^*)=\sigma_{pBlW}(T)\cup S(T^*)$$
\end{theorem}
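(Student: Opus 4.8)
The plan is to deduce Theorem \ref{t22} from Theorem \ref{t11} by passing to adjoints, using the duality relations that have already been recorded in the excerpt. First I would recall the three ingredients: (i) $\sigma_{pBuF}(S) = \sigma_{pBlF}(S^*)$ for every $S \in \mathcal{B}(X)$, which was established right after Definition 2.3 (together with the evident analogue $\sigma_{pBuW}(S) = \sigma_{pBlW}(S^*)$, which follows by the same argument applied to the index-constrained classes $pBuW$ and $pBlW$, since $(S_{|X_1})^* = (S^*)_{|X_1^\perp}$ reverses the sign of the Fredholm index); (ii) the identity $\sigma_{lgD}(S) \longleftrightarrow acc(\sigma_{ap}(S))$ and $\sigma_{rgD}(S) \longleftrightarrow acc(\sigma_{su}(S))$ from \cite{KMB}, combined with the classical duality $\sigma_{ap}(S) = \sigma_{su}(S^*)$ and $\sigma_{su}(S) = \sigma_{ap}(S^*)$, which gives $\sigma_{rgD}(S) = \sigma_{lgD}(S^*)$; and (iii) the definition $S(T^*)$ as the set where $T^*$ fails SVEP.

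With these in hand, the argument is a one-line substitution. Apply Theorem \ref{t11} to the operator $T^* \in \mathcal{B}(X^*)$: this yields
$$\sigma_{lgD}(T^*) = \sigma_{pBuF}(T^*) \cup S(T^*) = \sigma_{pBuW}(T^*) \cup S(T^*).$$
Now rewrite each term using the duality relations. The left-hand side is $\sigma_{lgD}(T^*) = \sigma_{rgD}(T)$ by (ii). On the right-hand side, $\sigma_{pBuF}(T^*) = \sigma_{pBlF}(T^{**})$; but one checks, using the canonical embedding and the fact that pseudo semi B-Fredholmness of $T^{**}$ restricted to the reflexive copy of $X$ is equivalent to that of $T$ (or more directly, simply using $\sigma_{pBuF}(S) = \sigma_{pBlF}(S^*)$ in the form $\sigma_{pBuF}(T^*) = \sigma_{pBlF}((T^*)^*)$ together with $\sigma_{pBlF}(T) = \sigma_{pBuF}(T^*)$, i.e. reading the established identity from the other side), that $\sigma_{pBuF}(T^*) = \sigma_{pBlF}(T)$. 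Likewise $\sigma_{pBuW}(T^*) = \sigma_{pBlW}(T)$. Substituting gives exactly
$$\sigma_{rgD}(T) = \sigma_{pBlF}(T) \cup S(T^*) = \sigma_{pBlW}(T) \cup S(T^*),$$
which is the claim.

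The only point requiring a little care — and the one I would expect to be the main (minor) obstacle — is justifying $\sigma_{pBuF}(T^*) = \sigma_{pBlF}(T)$ cleanly, since the excerpt only states the relation in the direction $\sigma_{pBuF}(T) = \sigma_{pBlF}(T^*)$. There are two honest ways to close this gap: either observe that the defining property ``$T$ is pseudo lower semi B-Fredholm iff $T^*$ is pseudo upper semi B-Fredholm'' is itself an ``iff'', so it reads equally well as $\sigma_{pBlF}(T) = \sigma_{pBuF}(T^*)$; or, if one insists on only using the literal statement $\sigma_{pBuF}(S)=\sigma_{pBlF}(S^*)$ for all $S$, apply it with $S = T^*$ to get $\sigma_{pBuF}(T^*) = \sigma_{pBlF}(T^{**})$ and then note that for the purposes of pseudo semi B-Fredholm spectra $T^{**}$ behaves like $T$ (the relevant decomposition of $X$ lifts to $X^{**}$ and descends back via the bidual projection, because $T_{|X_2}$ quasi-nilpotent $\iff$ $(T_{|X_2})^{**}$ quasi-nilpotent and $T_{|X_1}$ lower semi-Fredholm $\iff$ $(T_{|X_1})^{**}$ lower semi-Fredholm by standard duality for semi-Fredholm operators). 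The same remark handles the $W$-versions, the index being preserved under double adjoint. Everything else is a direct transcription of Theorem \ref{t11}, so the proof is genuinely short; I would write it as ``apply Theorem \ref{t11} to $T^*$ and use the duality relations $\sigma_{rgD}(T)=\sigma_{lgD}(T^*)$, $\sigma_{pBlF}(T)=\sigma_{pBuF}(T^*)$, $\sigma_{pBlW}(T)=\sigma_{pBuW}(T^*)$.''
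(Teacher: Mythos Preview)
Your proposal is correct and follows exactly the paper's approach: apply Theorem \ref{t11} to $T^*$ and use the duality relations $\sigma_{rgD}(T)=\sigma_{lgD}(T^*)$ and $\sigma_{pBlF}(T)=\sigma_{pBuF}(T^*)$ (the paper's one-line proof does precisely this, omitting the $\sigma_{pBlW}$ case). Your worry about the direction of the identity $\sigma_{pBuF}(T^*)=\sigma_{pBlF}(T)$ is unnecessary, since the text records the underlying equivalence as an ``if and only if'' (``$T$ is pseudo upper semi B-Fredholm if and only if $T^*$ is pseudo lower semi B-Fredholm''), so no detour through $T^{**}$ is needed.
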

\begin{proof}
 $\sigma_{rgD}(T)=\sigma_{lgD}(T^*)=\sigma_{pBlF}(T^*)\cup S(T^*)=\sigma_{pBlF}(T)\cup S(T^*)$,
then $\sigma_{rgD}(T)=\sigma_{pBlF}(T)\cup S(T^*)$
\end{proof}
\begin{remark}
 Theorem \ref{t11}  and Theorem \ref{t22} extend \cite[Theorem 2.1]{AZ} and  \cite[Theorem 2.2]{AZ}.
 \end{remark}
From the preceding Theorems we get the following corollaries.
\begin{corollary}
Let $T\in\mathcal{B}(X)$. Then
    $$\sigma_{gD}(T)=\sigma_{pBF}(T)\cup S(T)\cup S(T^*)$$
\end{corollary}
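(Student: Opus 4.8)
The plan is to simply combine the decomposition $\sigma_{gD}(T)=\sigma_{lgD}(T)\cup\sigma_{rgD}(T)$ (recorded in the introduction from \cite{KMB}) with the two characterizations already proved. First I would invoke Theorem \ref{t11} to write $\sigma_{lgD}(T)=\sigma_{pBuF}(T)\cup S(T)$, and Theorem \ref{t22} to write $\sigma_{rgD}(T)=\sigma_{pBlF}(T)\cup S(T^*)$. Taking the union of these two identities gives
$$\sigma_{gD}(T)=\bigl(\sigma_{pBuF}(T)\cup S(T)\bigr)\cup\bigl(\sigma_{pBlF}(T)\cup S(T^*)\bigr).$$

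Next I would regroup the terms on the right-hand side, using commutativity and associativity of set union, as
$$\sigma_{gD}(T)=\bigl(\sigma_{pBuF}(T)\cup\sigma_{pBlF}(T)\bigr)\cup S(T)\cup S(T^*).$$
Finally, recalling from Section 2 that $\sigma_{pBF}(T)=\sigma_{pBuF}(T)\cup\sigma_{pBlF}(T)$ (the pseudo B-Fredholm operators being exactly those that are both pseudo upper and pseudo lower semi B-Fredholm), this yields $\sigma_{gD}(T)=\sigma_{pBF}(T)\cup S(T)\cup S(T^*)$, as claimed.

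There is essentially no obstacle here: the corollary is a purely formal consequence of Theorems \ref{t11} and \ref{t22} together with the definitional identities $\sigma_{gD}(T)=\sigma_{lgD}(T)\cup\sigma_{rgD}(T)$ and $\sigma_{pBF}(T)=\sigma_{pBuF}(T)\cup\sigma_{pBlF}(T)$. The only point worth a moment's care is making sure one uses the correct conjugate-dependence, i.e. that the right generalized Drazin spectrum contributes $S(T^*)$ rather than $S(T)$, which is precisely what Theorem \ref{t22} supplies via the duality $\sigma_{rgD}(T)=\sigma_{lgD}(T^*)$ and the fact $\sigma_{pBuF}(T^*)=\sigma_{pBlF}(T)$.
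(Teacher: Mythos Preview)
Your proof is correct and is exactly the argument the paper intends: the corollary is stated immediately after Theorems \ref{t11} and \ref{t22} with only the remark ``From the preceding Theorems we get the following corollaries,'' and your derivation via the identities $\sigma_{gD}(T)=\sigma_{lgD}(T)\cup\sigma_{rgD}(T)$ and $\sigma_{pBF}(T)=\sigma_{pBuF}(T)\cup\sigma_{pBlF}(T)$ spells out precisely that implicit step.
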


\begin{corollary}\label{c1}
Let $T\in \mathcal{B}(X)$.\\
  If $T$ has  the SVEP then:
 $$\sigma_{lgD}(T)=\sigma_{pBuF}(T)=\sigma_{pBuW}(T)   \mbox{           :  (1) }$$
If $T^*$ has the SVEP then:
$$\sigma_{rgD}(T)=\sigma_{pBlF}(T)=\sigma_{pBlW}(T)    \mbox{           :  (2) }$$
if both $T$ and $T^*$ have the  SVEP then all the spectra  in $(1)$ and $(2)$ coincide  and are equal to the pseudo B-Fredholm, pseudo B-Weyl and generalized Drazin spectra.
\end{corollary}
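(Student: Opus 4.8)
The plan is to read everything off Theorems \ref{t11} and \ref{t22}, using the defining fact that $T$ has the SVEP if and only if $S(T)=\emptyset$. For (1), assume $T$ has the SVEP, so $S(T)=\emptyset$; substituting this into the two equalities of Theorem \ref{t11} gives at once $\sigma_{lgD}(T)=\sigma_{pBuF}(T)=\sigma_{pBuW}(T)$. For (2), assume $T^{*}$ has the SVEP, so $S(T^{*})=\emptyset$; substituting into the two equalities of Theorem \ref{t22} gives $\sigma_{rgD}(T)=\sigma_{pBlF}(T)=\sigma_{pBlW}(T)$. Both steps are immediate.

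For the last assertion, assume both $T$ and $T^{*}$ have the SVEP. By (1) and (2) the six spectra occurring in (1)--(2) collapse to $\sigma_{lgD}(T)$ and to $\sigma_{rgD}(T)$, so it remains to prove $\sigma_{lgD}(T)=\sigma_{rgD}(T)=\sigma_{gD}(T)=\sigma_{pBF}(T)=\sigma_{pBW}(T)$. Here I would invoke the standard global consequences of localized SVEP (see \cite{Aie}): $T$ with the SVEP forces $\sigma_{su}(T)=\sigma(T)$ and $T^{*}$ with the SVEP forces $\sigma_{ap}(T)=\sigma(T)$, whence $acc(\sigma_{ap}(T))=acc(\sigma(T))=acc(\sigma_{su}(T))$. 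Combining this with the characterizations $\lambda\in\sigma_{lgD}(T)\Longleftrightarrow\lambda\in acc(\sigma_{ap}(T))$ and $\lambda\in\sigma_{rgD}(T)\Longleftrightarrow\lambda\in acc(\sigma_{su}(T))$ recalled from \cite{KMB}, together with $\lambda\in\sigma_{gD}(T)\Longleftrightarrow\lambda\in acc(\sigma(T))$ from \cite{K}, gives $\sigma_{lgD}(T)=\sigma_{rgD}(T)=\sigma_{gD}(T)$. Then $\sigma_{pBF}(T)=\sigma_{pBuF}(T)\cup\sigma_{pBlF}(T)=\sigma_{gD}(T)$ (equivalently, one may use the corollary $\sigma_{gD}(T)=\sigma_{pBF}(T)\cup S(T)\cup S(T^{*})$). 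Finally, for the pseudo B-Weyl spectrum, the always-valid inclusion $\sigma_{pBF}(T)\subseteq\sigma_{pBW}(T)$ together with $\sigma_{pBW}(T)\subseteq\sigma_{gD}(T)$ --- the latter because a generalized Drazin decomposition $T-\lambda I=T_{1}\oplus T_{2}$ with $T_{1}$ invertible (hence Weyl) and $T_{2}$ quasi-nilpotent is a fortiori a pseudo B-Weyl decomposition --- forces $\sigma_{pBW}(T)=\sigma_{gD}(T)$ as well.

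There is no real obstacle here; the proof is bookkeeping once Theorems \ref{t11}--\ref{t22} are available. The only points requiring a little attention are in the last part: one must upgrade $S(T)=S(T^{*})=\emptyset$ to the full spectral identities $\sigma_{su}(T)=\sigma(T)=\sigma_{ap}(T)$, and one must note the elementary inclusion $\sigma_{pBW}(T)\subseteq\sigma_{gD}(T)$ in order to pin down the pseudo B-Weyl spectrum.
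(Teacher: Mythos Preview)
Your proposal is correct and matches the paper's intended approach: the paper states no proof for this corollary, simply presenting it as an immediate consequence of Theorems~\ref{t11} and~\ref{t22} (and the intervening corollary $\sigma_{gD}(T)=\sigma_{pBF}(T)\cup S(T)\cup S(T^{*})$), which is exactly how you obtain (1), (2), and the equality $\sigma_{gD}(T)=\sigma_{pBF}(T)$. Your additional work for the final assertion --- passing through $\sigma_{ap}(T)=\sigma(T)=\sigma_{su}(T)$ to force $\sigma_{lgD}(T)=\sigma_{rgD}(T)$, and sandwiching $\sigma_{pBW}(T)$ between $\sigma_{pBF}(T)$ and $\sigma_{gD}(T)$ --- fills in details the paper leaves implicit but is entirely in the same spirit.
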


\begin{example}
Let $T$ be the unilateral weighted shift on $l^2(\mathbb{N})$ defined by:
$$Te_n=\left\{
  \begin{array}{ll}
    0, &  \hbox{if}\,\,  n=p!\,\, for\,\, some\,\, p\in\mathbb{N}\\
    e_{n+1} &   \hbox{otherwise.}
  \end{array}
\right.$$
The adjoint operator of $T$ is :
$$T^*e_n=\left\{
  \begin{array}{ll}
    0 &  \hbox{if}\,\,  n=0\,\,or\,\, n=p!+1\,\, for\,\, some\,\, p\in\mathbb{N}\\
    e_{n-1} &   \hbox{otherwise.}
  \end{array}
\right.$$
We have $\sigma(T)=\overline{D(0,1)}$ the unit closed  disc.
The point spectrum of $T$ and $T^*$ are : $\sigma_{p}(T)=\sigma_{p}(T^*)=\{0\}$, hence $T$ and $T^*$ have the SVEP.
Then $\sigma_{ap}(T)=\sigma_{su}(T)=\sigma(T)$, hence $\sigma_{gD}(T)=\sigma_{lgD}(T)=\sigma_{rgD}(T)=\overline{D(0,1)}$
From  Corollary \ref{c1}, we have
 $$\sigma_{pBuF}(T)=\sigma_{pBlF}(T)=\sigma_{pBF}(T)=\sigma_{gD}(T)=\sigma_{lgD}(T)=\sigma_{rgD}(T)=\overline{D(0,1)}$$
 \end{example}


\section{Applications}
A bounded linear operator $T$ is called supercyclic
provided there is some $x \in X$ such that the set $\{\lambda T^n, \,\, \lambda\in\mathbb{C}\,\, ,n=0,1,2,..\}$
is dense in $X$. It is well Known that if $T$ is supercyclic then $\sigma_{p}(T^*)=\{0\}$ or $\sigma_{p}(T^*)=\{\alpha\}$
for some nonzero $\alpha\in\mathbb{C}$.  Since an operator with countable point spectrum has SVEP, then we have the following:
\begin{proposition}
Let $T\in\mathcal{B}(X)$, a supercyclic operator. Then :
\begin{center}
    $\sigma_{rgD}(T)=\sigma_{pBlF}(T)$
\end{center}
\end{proposition}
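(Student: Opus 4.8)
The plan is to read off the identity from Theorem \ref{t22} and then kill the error term coming from the SVEP of the adjoint. By Theorem \ref{t22}, for every bounded operator one has
$\sigma_{rgD}(T)=\sigma_{pBlF}(T)\cup S(T^{*})$, so the proposition reduces to the single assertion that $S(T^{*})=\emptyset$, i.e. that $T^{*}$ has the SVEP at every point of $\mathbb{C}$.

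To establish this I would use the structural fact recalled immediately before the statement: if $T$ is supercyclic then either $\sigma_{p}(T^{*})=\{0\}$ or $\sigma_{p}(T^{*})=\{\alpha\}$ for some nonzero $\alpha\in\mathbb{C}$; in particular $\sigma_{p}(T^{*})$ is at most a singleton, hence countable. Now recall from the introduction that $S(T^{*})$ is an open subset of $\mathbb{C}$ and that $S(T^{*})\subseteq\sigma_{p}(T^{*})$. An open set contained in a countable set is empty, so $S(T^{*})=\emptyset$; equivalently, one invokes the standard remark that an operator with countable point spectrum has the SVEP (an analytic solution $f$ of $(T^{*}-zI)f(z)=0$ on a neighbourhood $U$ has $f(z)\in N(T^{*}-zI)$ for all $z\in U$, and if $f\not\equiv 0$ the nonempty open set $\{z:f(z)\neq0\}$ would be contained in $\sigma_{p}(T^{*})$, contradicting countability).

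Substituting $S(T^{*})=\emptyset$ back into Theorem \ref{t22} yields $\sigma_{rgD}(T)=\sigma_{pBlF}(T)$, as claimed; alternatively, once $T^{*}$ is known to have the SVEP one may simply quote Corollary \ref{c1}(2). There is no real obstacle in this argument: the only step deserving a line of justification is the passage from finite (or countable) point spectrum of $T^{*}$ to the SVEP of $T^{*}$, and that is immediate from the openness of $S(T^{*})$ together with the inclusion $S(T^{*})\subseteq\sigma_{p}(T^{*})$ noted in the introduction.
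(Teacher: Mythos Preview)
Your proof is correct and follows exactly the approach the paper intends: the paper does not give a separate proof of this proposition but simply notes, in the sentence preceding it, that supercyclicity forces $\sigma_{p}(T^{*})$ to be (at most) a singleton, hence countable, so $T^{*}$ has the SVEP, and then the result drops out of Corollary~\ref{c1}(2) (equivalently Theorem~\ref{t22}). Your write-up merely makes explicit the trivial step ``open subset of a countable set is empty'' behind the implication ``countable point spectrum $\Rightarrow$ SVEP''.
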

Since, Every hyponormal operator $T$ on a Hilbert space has the single valued extension property, we have
\begin{proposition}
Let $T$ a hyponormal operator on a Hilbert space then:
\begin{center}
    $\sigma_{lgD}(T)=\sigma_{pBuF}(T)$
\end{center}
In particular, If $T$ is auto-adjoint then :     $\sigma_{gD}(T)=\sigma_{pBF}(T)$
\end{proposition}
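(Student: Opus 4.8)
The plan is to invoke the general results \ref{t11} and \ref{t22} (equivalently Corollary \ref{c1}) together with the classical fact that a hyponormal operator on a Hilbert space has SVEP at every point. First I would recall why a hyponormal $T$ has SVEP: if $f:U\to H$ is analytic with $(T-zI)f(z)=0$ on $U$, then for each $z\in U$ the vector $f(z)$ lies in $N(T-zI)$, and since $T$ is hyponormal one has $N(T-zI)=N(T^*-\bar z I)$ and these eigenspaces are mutually orthogonal for distinct eigenvalues; a short normal-families/identity-theorem argument (or the standard reference, e.g. Laursen--Neumann or Aiena) forces $f\equiv 0$. Hence $S(T)=\emptyset$. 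Applying Corollary \ref{c1}(1) immediately yields $\sigma_{lgD}(T)=\sigma_{pBuF}(T)=\sigma_{pBuW}(T)$, which contains the asserted equality $\sigma_{lgD}(T)=\sigma_{pBuF}(T)$.

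For the ``in particular'' clause, if $T$ is self-adjoint then both $T$ and $T^*=T$ are hyponormal, so both have SVEP, i.e. $S(T)=S(T^*)=\emptyset$. Then the corollary $\sigma_{gD}(T)=\sigma_{pBF}(T)\cup S(T)\cup S(T^*)$ stated just above Corollary \ref{c1} collapses to $\sigma_{gD}(T)=\sigma_{pBF}(T)$. Alternatively one can combine Theorem \ref{t11} and Theorem \ref{t22}: with $S(T)=S(T^*)=\emptyset$ we get $\sigma_{lgD}(T)=\sigma_{pBuF}(T)$ and $\sigma_{rgD}(T)=\sigma_{pBlF}(T)$, and taking unions using $\sigma_{gD}=\sigma_{lgD}\cup\sigma_{rgD}$ and $\sigma_{pBF}=\sigma_{pBuF}\cup\sigma_{pBlF}$ gives the same conclusion.

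There is essentially no obstacle here, since all the analytic content is already packaged in the earlier theorems; the only thing that must be supplied is the well-known assertion that hyponormality implies SVEP, and even that is quoted in the sentence preceding the proposition. So the proof is a two-line deduction: cite the SVEP property of hyponormal operators, then apply Corollary \ref{c1}. If one wanted to be self-contained about the self-adjoint case without invoking the intermediate corollary, the mild care needed is just to verify that $T=T^*$ makes $T^*$ hyponormal (trivially, as it is normal), so that the hypothesis of both parts of Corollary \ref{c1} is met.

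\begin{proof}
Since $T$ is hyponormal on a Hilbert space, $T$ has the single valued extension property, that is $S(T)=\emptyset$. By Theorem \ref{t11}, $\sigma_{lgD}(T)=\sigma_{pBuF}(T)\cup S(T)=\sigma_{pBuF}(T)$.\\
If moreover $T$ is self-adjoint, then $T^*=T$ is also hyponormal, hence $S(T^*)=\emptyset$ as well. Applying Theorem \ref{t22} we obtain $\sigma_{rgD}(T)=\sigma_{pBlF}(T)\cup S(T^*)=\sigma_{pBlF}(T)$. Taking the union with the previous equality and using $\sigma_{gD}(T)=\sigma_{lgD}(T)\cup\sigma_{rgD}(T)$ together with $\sigma_{pBF}(T)=\sigma_{pBuF}(T)\cup\sigma_{pBlF}(T)$, we conclude that $\sigma_{gD}(T)=\sigma_{pBF}(T)$.
\end{proof}
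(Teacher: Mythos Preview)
Your proof is correct and matches the paper's approach: the paper itself gives no explicit proof but simply prefaces the proposition with the remark that every hyponormal operator has SVEP, so the result is an immediate consequence of Corollary~\ref{c1} (equivalently Theorems~\ref{t11} and~\ref{t22}), exactly as you argue. Your treatment of the self-adjoint case via $T^*=T$ and the union identities is likewise the intended deduction.
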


Let $A$ be a semi-simple commutative Banach algebra.\\
The mapping $T$ $:$ $A\longrightarrow  A$ is said to be a multiplier of $A$ if
$T (x)y = xT (y)$ for all $x, y \in A.$\\
It is well known each multiplier  on $A$  is a continuous  linear operator  and that the set of all multiplier on $A$ is a unital closed commutative  subalgebra of $B(A)$ \cite[Proposition 4.1.1]{lau}. Also
the semi-simplicity of A implies that every multiplier has the SVEP (see \cite[Proposition 2.2.1]{lau}).
According to  Corollary \ref{c1} we have
\begin{proposition}\label{pp}
Let $T$ be a multiplier on semi-simple commutative Banach
algebra $A$, then the following assertions are equivalent
\begin{enumerate}
  \item  $T$ is pseudo upper semi B-Fredholm .
  \item  $T$ is  left generalized Drazin invertible.
\end{enumerate}
\end{proposition}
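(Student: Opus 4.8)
The plan is to reduce the statement to Corollary \ref{c1} via the single-valued extension property of multipliers. First I would recall that, since $A$ is semi-simple, every multiplier $T$ on $A$ is automatically a bounded linear operator (\cite[Proposition 4.1.1]{lau}) and has the SVEP at every point of $\mathbb{C}$; this last fact is exactly \cite[Proposition 2.2.1]{lau}, which was quoted just above the statement. Hence $S(T)=\emptyset$, i.e.\ $T$ has the SVEP in the sense used throughout the paper.

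Next, because $T$ has the SVEP, Corollary \ref{c1}(1) applies verbatim and yields the equality of spectra
$$\sigma_{lgD}(T)=\sigma_{pBuF}(T)=\sigma_{pBuW}(T).$$
In particular $\sigma_{lgD}(T)=\sigma_{pBuF}(T)$ as subsets of $\mathbb{C}$.

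Finally I would translate this equality of spectra into the desired equivalence by specializing to the point $\lambda=0$. By definition, $T$ is left generalized Drazin invertible precisely when $0\notin\sigma_{lgD}(T)$, and $T$ is pseudo upper semi B-Fredholm precisely when $0\notin\sigma_{pBuF}(T)$. Since $\sigma_{lgD}(T)=\sigma_{pBuF}(T)$, the point $0$ lies outside one of these sets if and only if it lies outside the other; this establishes $(1)\Longleftrightarrow(2)$.

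As for difficulties, there is essentially no obstacle: the entire argument rests on the already-cited SVEP property of multipliers on semi-simple commutative Banach algebras together with Corollary \ref{c1} (which in turn is a consequence of Theorem \ref{t11}). The only point requiring a moment's care is the purely formal passage between the membership statements ``$T$ is pseudo upper semi B-Fredholm'' / ``$T$ is left generalized Drazin invertible'' and the corresponding spectral statements ``$0\notin\sigma_{pBuF}(T)$'' / ``$0\notin\sigma_{lgD}(T)$''; once that identification is noted, the proof is immediate.
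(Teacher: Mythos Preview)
Your proposal is correct and follows exactly the paper's own argument: the paper states immediately before the proposition that every multiplier on a semi-simple commutative Banach algebra has the SVEP (\cite[Proposition 2.2.1]{lau}) and then invokes Corollary~\ref{c1}, without providing any further details. Your write-up simply makes explicit the passage from the spectral equality $\sigma_{lgD}(T)=\sigma_{pBuF}(T)$ to the equivalence at $\lambda=0$, which the paper leaves implicit.
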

Now if assume in additional that $A$ is regular and Tauberian (see \cite[Definition 4.9.7]{lau}), then every multiplier $T^*$ has  SVEP. Hence, we have the following Proposition.
\begin{proposition}\label{ppp}
Let $T$ be a multiplier on semi-simple regular and Tauberian commutative Banach
algebra $A$, then the following assertions are equivalent
\begin{enumerate}
  \item  $T$ is pseudo B-Fredholm .
  \item  $T$ is   generalized Drazin invertible.
\end{enumerate}
\end{proposition}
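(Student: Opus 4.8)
The plan is to deduce Proposition \ref{ppp} from Corollary \ref{c1} in exactly the same way Proposition \ref{pp} follows from part (1) of that corollary, only now invoking part (2). First I would recall the two hypotheses that do all the work: since $A$ is semi-simple and commutative, the cited result \cite[Proposition 2.2.1]{lau} gives that every multiplier $T$ on $A$ has the SVEP; and since $A$ is moreover regular and Tauberian, the remark preceding the statement (again relying on \cite{lau}) gives that the adjoint $T^{*}$ also has the SVEP. Thus for a multiplier $T$ on such an algebra, \emph{both} $T$ and $T^{*}$ have the SVEP.

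Next I would apply the last clause of Corollary \ref{c1}: when both $T$ and $T^{*}$ have the SVEP, all the spectra appearing in $(1)$ and $(2)$ coincide and equal the pseudo B-Fredholm, pseudo B-Weyl and generalized Drazin spectra. In particular $\sigma_{pBF}(T)=\sigma_{gD}(T)$. This is the single substantive equality needed. I would then spell out the equivalence in the form stated: $T$ is pseudo B-Fredholm means $\lambda=0\notin\sigma_{pBF}(T)$, which by the displayed equality is the same as $0\notin\sigma_{gD}(T)$, i.e.\ $T$ is generalized Drazin invertible. Hence $(1)\Longleftrightarrow(2)$.

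There is essentially no obstacle here; the proposition is a one-line corollary. The only point requiring a modicum of care is making sure the Tauberian/regular hypothesis is genuinely what delivers SVEP for $T^{*}$ — this is the content of the sentence immediately before the statement, so I would simply cite it (\cite[Definition 4.9.7]{lau} and the surrounding discussion in \cite{lau}) rather than reprove it. A clean write-up would read: \emph{Proof.} Since $A$ is semi-simple and commutative, $T$ has the SVEP by \cite[Proposition 2.2.1]{lau}; since $A$ is in addition regular and Tauberian, $T^{*}$ has the SVEP as well. By Corollary \ref{c1}, $\sigma_{pBF}(T)=\sigma_{gD}(T)$, and evaluating at $0$ gives the asserted equivalence. $\square$
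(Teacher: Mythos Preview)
Your proposal is correct and follows essentially the same approach as the paper: the paper states no formal proof but makes clear in the sentence preceding the proposition that the regular and Tauberian hypotheses yield SVEP for $T^{*}$, so that together with the SVEP for $T$ coming from semi-simplicity one invokes Corollary~\ref{c1} to obtain $\sigma_{pBF}(T)=\sigma_{gD}(T)$. Your write-up simply makes this explicit and evaluates at $0$, which is exactly the intended argument.
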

Let  $G$ a locally compact abelian group, with group operation + and Haar measure $\mu$,  let $L^1(G)$ consist of all $\mathbb{C}$-valued
functions on $G$ integrable with respect to Haar measure and $M (G)$ the Banach
algebra of regular complex Borel measures on $G$. We recall that $L^1(G)$ is a regular
semi-simple Tauberian commutative Banach algebra. Then we have the following:

\begin{corollary}\label{cc}
Let $G$ be a locally compact abelian group, $\mu \in M (G)$. Then every convolution operator $T_{\mu}$
$: L^1(G)\longrightarrow L^1(G)$, $T_{\mu}(k) = \mu \star k$ is pseudo B-Fredholm if and only if is   generalized Drazin invertible.
\end{corollary}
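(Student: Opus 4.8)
The plan is to obtain Corollary \ref{cc} as an immediate specialization of Proposition \ref{ppp}, so the real work reduces to checking that the two hypotheses of that proposition are satisfied when we take $A=L^1(G)$ and $T=T_\mu$.

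First I would recall the classical structural facts about $L^1(G)$ that are already quoted in the paragraph preceding the statement: with convolution as multiplication, $L^1(G)$ is a commutative Banach algebra which is semi-simple, regular and Tauberian (the Gelfand transform is the Fourier transform onto an algebra of functions on the dual group $\widehat{G}$, and that algebra is regular and Tauberian). Hence $A=L^1(G)$ satisfies all the standing assumptions of Proposition \ref{ppp}.

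Second, I would verify that $T_\mu$ is a multiplier of $L^1(G)$ in the sense of the definition given above, i.e.\ that $T_\mu(x)\star y=x\star T_\mu(y)$ for all $x,y\in L^1(G)$. Using that $M(G)\star L^1(G)\subseteq L^1(G)$ (so that $T_\mu$ indeed maps $L^1(G)$ into itself, boundedly, with $\|T_\mu\|\le\|\mu\|$) together with the associativity and commutativity of convolution,
\[
T_\mu(x)\star y=(\mu\star x)\star y=\mu\star(x\star y)=(x\star y)\star\mu=x\star(\mu\star y)=x\star T_\mu(y).
\]
Thus $T_\mu$ is a multiplier on the semi-simple, regular, Tauberian, commutative Banach algebra $L^1(G)$.

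With these two points in hand, Proposition \ref{ppp} applies verbatim to $T=T_\mu$ and yields the equivalence ``$T_\mu$ is pseudo B-Fredholm $\iff$ $T_\mu$ is generalized Drazin invertible'', which is exactly the claim. There is essentially no obstacle beyond this bookkeeping; the only point that deserves a word of care is that $L^1(G)$ possesses an identity only when $G$ is discrete, but neither the notion of multiplier nor Proposition \ref{ppp} requires a unit, so the argument goes through for an arbitrary locally compact abelian group.
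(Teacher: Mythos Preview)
Your proposal is correct and follows precisely the route the paper intends: the paper states, just before the corollary, that $L^1(G)$ is a regular, semi-simple, Tauberian commutative Banach algebra, and then the corollary is an immediate application of Proposition~\ref{ppp} to the multiplier $T_\mu$. You have simply spelled out the verification that $T_\mu$ is indeed a multiplier and added the remark about the (possibly missing) unit, which is extra care but changes nothing substantive.
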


\begin{remark}
Proposition \ref{ppp} and corollary \ref{cc} generalize \cite[Proposition 3.4]{AZ} and \cite[Corollary 3.3]{AZ}. These results also generalize some results in \cite{B3}
\end{remark}

\section{Perturbation}
Let $\mathcal{F}(X)$ denote the ideal of finite rank operators on $X$.
In the following, we show that both $\sigma_{lgD}(T)$ and $\sigma_{rgD}(T)$ are stable under
additive commuting power finite rank operator.
\begin{proposition}\label{pa}
Suppose that $F\in\mathcal{B}(X)$ satisfies $F^n\in\mathcal{F}(X)$ for some $n \in \mathbb{N}$ and that
$T\in\mathcal{B}(X)$ commutes with $F$. Then we have
$$\sigma_{lgD}(T)=\sigma_{lgD}(T+F)  \mbox{  and   }\sigma_{rgD}(T)=\sigma_{rgD}(T+F)$$
\end{proposition}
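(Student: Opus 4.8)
The plan is to work through the two identities recalled above from \cite{KMB}, namely $\lambda\in\sigma_{lgD}(T)\Leftrightarrow\lambda\in\mathrm{acc}(\sigma_{ap}(T))$ and $\lambda\in\sigma_{rgD}(T)\Leftrightarrow\lambda\in\mathrm{acc}(\sigma_{su}(T))$, and to show that the perturbation moves $\sigma_{ap}$ and $\sigma_{su}$ only by a finite set, so that their accumulation points are unchanged. Put $G:=F^{n}\in\mathcal{F}(X)$; then $G$ commutes with $T$ and with $F$. Since $R(G)\supseteq R(G^{2})\supseteq\cdots$ is a decreasing chain of subspaces of the finite-dimensional space $R(G)$, it stabilizes, say $R(G^{k})=R(G^{k+1})=:E$, a finite-dimensional subspace, invariant under $T$ and $F$ (anything commuting with $G$ preserves $R(G^{k})$), and on which $G$ restricts to a bijection. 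Standard ascent/descent bookkeeping then gives $N(G^{k})=N(G^{k+1})=:W$, that $W$ is closed and invariant under $T$ and $F$, and that $X=E\oplus W$. Writing $T=T_{E}\oplus T_{W}$ and $F=F_{E}\oplus F_{W}$ with respect to this decomposition, the summand $F_{W}$ satisfies $F_{W}^{\,nk}=(G^{k})|_{W}=0$, hence is nilpotent and commutes with $T_{W}$.

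The second ingredient is the elementary fact that a commuting nilpotent perturbation does not move $\sigma_{ap}$ or $\sigma_{su}$: if $N\in\mathcal{B}(Z)$ is nilpotent and $SN=NS$, then $\sigma_{ap}(S+N)=\sigma_{ap}(S)$ and $\sigma_{su}(S+N)=\sigma_{su}(S)$. For $\sigma_{ap}$ it suffices, after translating by $\mu$ and using the symmetry $S\leftrightarrow S+N$, $N\leftrightarrow -N$, to check that $S$ bounded below implies $S+N$ bounded below: if there were unit vectors $x_{j}$ with $(S+N)x_{j}\to 0$, then applying $N^{m-1},N^{m-2},\dots$ (where $N^{m}=0$) in turn and using $SN=NS$ successively forces $N^{m-1}x_{j}\to 0,\dots,x_{j}\to 0$, a contradiction. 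The $\sigma_{su}$ statement then follows by passing to adjoints.

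Combining these: because $\dim E<\infty$, the sets $\sigma_{ap}(T_{E})=\sigma(T_{E})$ and $\sigma_{ap}(T_{E}+F_{E})=\sigma(T_{E}+F_{E})$ are finite, while $\sigma_{ap}(T_{W}+F_{W})=\sigma_{ap}(T_{W})$ by the lemma. Using $\sigma_{ap}(A\oplus B)=\sigma_{ap}(A)\cup\sigma_{ap}(B)$ we get
$$\sigma_{ap}(T+F)=\sigma(T_{E}+F_{E})\cup\sigma_{ap}(T_{W}),\qquad \sigma_{ap}(T)=\sigma(T_{E})\cup\sigma_{ap}(T_{W}),$$
so the two closed sets $\sigma_{ap}(T)$ and $\sigma_{ap}(T+F)$ differ only inside the finite set $\sigma(T_{E})\cup\sigma(T_{E}+F_{E})$; hence $\mathrm{acc}(\sigma_{ap}(T))=\mathrm{acc}(\sigma_{ap}(T+F))$, that is, $\sigma_{lgD}(T)=\sigma_{lgD}(T+F)$. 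Running the identical computation with $\sigma_{su}$ in place of $\sigma_{ap}$ — or, equivalently, applying the case just proved to $T^{*}$ and $F^{*}$, noting $(F^{*})^{n}=(F^{n})^{*}\in\mathcal{F}(X^{*})$ commutes with $T^{*}$ — yields $\sigma_{rgD}(T)=\sigma_{rgD}(T+F)$.

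I expect the only genuinely delicate step to be the reduction in the first paragraph: extracting from a commuting power-finite-rank $F$ a finite-dimensional summand together with a commuting \emph{nilpotent} summand. This rests on the standard but, in infinite dimensions, not entirely automatic facts that stabilization of the range chain of $G=F^{n}$ forces stabilization of the kernel chain and the topological direct-sum decomposition $X=R(G^{k})\oplus N(G^{k})$. It is also precisely here that the hypothesis ``$F^{n}$ finite rank'' (rather than merely ``$F$ Riesz'') is essential, since for a general commuting Riesz perturbation no such finite-dimensional splitting is available.
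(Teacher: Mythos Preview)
Your argument is correct. Both you and the paper reduce the claim to the identities $\sigma_{lgD}(T)=\mathrm{acc}\,\sigma_{ap}(T)$ and $\sigma_{rgD}(T)=\mathrm{acc}\,\sigma_{su}(T)$ from \cite{KMB}, and then to the invariance of $\mathrm{acc}\,\sigma_{ap}$ (resp.\ $\mathrm{acc}\,\sigma_{su}$) under the perturbation; the paper simply quotes \cite[Theorem 2.2]{ZZY} for this last fact and finishes by duality, whereas you supply a self-contained proof of it. Your route---splitting off a finite-dimensional reducing summand $E=R(G^{k})$ for $G=F^{n}$, so that $F$ acts nilpotently on the complement $W=N(G^{k})$, and then invoking the stability of $\sigma_{ap}$ and $\sigma_{su}$ under commuting nilpotent perturbations---is exactly the mechanism behind the cited result, so the two proofs are the same in spirit but yours is more explicit. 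The only point worth tightening in your write-up is the passage ``the two closed sets differ only inside a finite set, hence have the same accumulation points'': it is cleaner to note directly that $\mathrm{acc}(A\cup B)=\mathrm{acc}\,A\cup\mathrm{acc}\,B$ and that $\mathrm{acc}$ of a finite set is empty, which gives $\mathrm{acc}\,\sigma_{ap}(T)=\mathrm{acc}\,\sigma_{ap}(T_{W})=\mathrm{acc}\,\sigma_{ap}(T+F)$ in one line.
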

\begin{proof}
According to \cite[ Theorem 2.2]{ZZY}, we have $acc(\sigma_{ap}(T))=acc(\sigma_{ap}(T+F))$. Then $\lambda\in  \sigma_{lgD}(T)$ if and only if $\lambda\in acc(\sigma_{ap}(T))$ if and only if $\lambda\in acc(\sigma_{ap}(T+F))$ if and only if $\lambda\in\sigma_{lgD}(T+F)$. So $\sigma_{lgD}(T+F)=\sigma_{lgD}(T)$.\\
By duality, we have $\sigma_{rgD}(T)=\sigma_{rgD}(T+F)$.
\end{proof}
As a consequence of  Proposition \ref{pa},  we have  the following corollary.
\begin{corollary}\label{ee}
Suppose that $F\in \mathcal{B}(X)$ satisfies $F^n\in\mathcal{F}(X)$ for some $n \in \mathbb{N}$ and that
$T\in\mathcal{B}(X)$ commutes with $F$. Then we have
$$\sigma_{gD}(T)=\sigma_{gD}(T+F)$$
\end{corollary}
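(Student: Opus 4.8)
The statement is an immediate consequence of Proposition \ref{pa} together with the decomposition of the generalized Drazin spectrum recorded in the introduction, namely $\sigma_{gD}(T)=\sigma_{lgD}(T)\cup\sigma_{rgD}(T)$ (valid for every bounded operator, from \cite{KMB}). So the plan is simply to split $\sigma_{gD}$ into its left and right parts, apply the already-established invariance of each part under commuting power–finite-rank perturbations, and recombine.

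\textbf{Steps.} First I would invoke the hypotheses: $F\in\mathcal{B}(X)$ with $F^n\in\mathcal{F}(X)$ for some $n\in\mathbb{N}$, and $TF=FT$. These are exactly the hypotheses of Proposition \ref{pa}, so that proposition yields both $\sigma_{lgD}(T)=\sigma_{lgD}(T+F)$ and $\sigma_{rgD}(T)=\sigma_{rgD}(T+F)$. Second, I would write
$$\sigma_{gD}(T)=\sigma_{lgD}(T)\cup\sigma_{rgD}(T)=\sigma_{lgD}(T+F)\cup\sigma_{rgD}(T+F)=\sigma_{gD}(T+F),$$
where the first and last equalities are the decomposition of $\sigma_{gD}$ applied to $T$ and to $T+F$ respectively, and the middle equality is Proposition \ref{pa}. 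That closes the argument.

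\textbf{Main obstacle.} There is essentially no obstacle beyond the results already in place: all the work (the spectral mapping to $acc(\sigma_{ap})$ and $acc(\sigma_{su})$, and the perturbation result of \cite{ZZY}) has been done inside Proposition \ref{pa}. The only thing to be slightly careful about is that the identity $\sigma_{gD}=\sigma_{lgD}\cup\sigma_{rgD}$ is used for the perturbed operator $T+F$ as well as for $T$, which is legitimate since it holds for arbitrary operators in $\mathcal{B}(X)$. No duality argument or additional SVEP hypothesis is needed here.
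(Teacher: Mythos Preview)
Your argument is correct and matches the paper's intended route: the corollary is stated there simply as ``a consequence of Proposition \ref{pa}'', and the implicit deduction is exactly the one you wrote out, using $\sigma_{gD}=\sigma_{lgD}\cup\sigma_{rgD}$ from \cite{KMB} together with the two equalities of Proposition \ref{pa}.
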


The following example illustrates that  the approximate point spectrum  $\sigma_{ap}(.)$ in general is not preserved under
commuting finite rank perturbations.
\begin{example}
Let $A\in\mathcal{B}(l_2)$ defined by:
$$ A(x_{1},x_{2},....)=(0, x_{1},x_{2},...).$$

Let $0<\varepsilon< 1$,  $F_{\varepsilon}\in\mathcal{B}(l_2)$ be a finite rank operator  defined by:
$$ F_{\varepsilon}(x_{1},x_{2},....)=(- \varepsilon x_{1},0,0, ...).$$
Let  $T=A\oplus I$ and $F=0\oplus F_{\varepsilon}.$ Then $F$ is a finite rank operator and $TF=FT$. But
 $\sigma_{ap}(T)=\{\lambda\in\mathbb{C},  |\lambda|=1\}$,  $\sigma_{ap}(T+F)=\{\lambda\in\mathbb{C},  |\lambda|=1\}\cup\{1-\varepsilon \}$.
 \end{example}

Using  Corollary \ref{c1}, Proposition \ref{pa} and Corollary \ref{ee}, we can  prove the following corollary.
\begin{corollary}
Suppose that $F\in\mathcal{B}(X)$ satisfies $F^n\in\mathcal{F}(X)$ for some $n \in \mathbb{N}$ and that
$T\in\mathcal{B}(X)$ commutes with $F$.
\begin{enumerate}
  \item If $T$ has the SVEP, then $\sigma_{pBuF}(T)=\sigma_{pBuF}(T+F)$
  \item If $T^*$ has the SVEP, then $\sigma_{pBlF}(T)=\sigma_{pBlF}(T+F)$
  \item If $T$ and $T^*$ have the SVEP, then $\sigma_{pBF}(T)=\sigma_{pBF}(T+F)$
\end{enumerate}
\end{corollary}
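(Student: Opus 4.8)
The plan is to deduce each of the three statements by combining the SVEP-based identifications from Corollary \ref{c1} with the perturbation invariance of the generalized Drazin spectra from Proposition \ref{pa} and Corollary \ref{ee}. The essential point is that under the respective SVEP hypotheses the pseudo semi B-Fredholm spectra \emph{are} the left/right/two-sided generalized Drazin spectra, so invariance of the latter is exactly invariance of the former — provided that the SVEP hypothesis is preserved when passing from $T$ to $T+F$.

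First I would treat item $(1)$. Assume $T$ has the SVEP. By Corollary \ref{c1} applied to $T$ we have $\sigma_{pBuF}(T)=\sigma_{lgD}(T)$. Next I would observe that $T+F$ also has the SVEP: since $F^n\in\mathcal{F}(X)\subseteq\mathcal{K}(X)$ and $F$ commutes with $T$, a commuting (power) compact perturbation of an operator with SVEP again has SVEP — this is a standard consequence of the fact that SVEP is inherited by commuting Riesz (in particular power-finite-rank) perturbations; alternatively it follows directly from $S(T)\subseteq\sigma_{p}(T)$ together with $\sigma_{ap}(T)=\sigma_{ap}(T+F)$ up to at most a countable set of isolated points, as in the proof of Proposition \ref{pa}, which forces $S(T+F)=\emptyset$ once $S(T)=\emptyset$. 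Granting this, Corollary \ref{c1} now also applies to $T+F$, giving $\sigma_{pBuF}(T+F)=\sigma_{lgD}(T+F)$. Finally Proposition \ref{pa} gives $\sigma_{lgD}(T)=\sigma_{lgD}(T+F)$, and chaining the three equalities yields $\sigma_{pBuF}(T)=\sigma_{pBuF}(T+F)$.

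For item $(2)$ the argument is dual: if $T^*$ has the SVEP, then $(T+F)^*=T^*+F^*$ is a commuting power-finite-rank perturbation of $T^*$ (note $(F^*)^n=(F^n)^*$ is finite rank), so $(T+F)^*$ has the SVEP; then Corollary \ref{c1}$(2)$ applied to both $T$ and $T+F$ gives $\sigma_{pBlF}(T)=\sigma_{rgD}(T)$ and $\sigma_{pBlF}(T+F)=\sigma_{rgD}(T+F)$, and Proposition \ref{pa} supplies $\sigma_{rgD}(T)=\sigma_{rgD}(T+F)$. Item $(3)$ is then immediate: if both $T$ and $T^*$ have SVEP, then by the above both $T+F$ and $(T+F)^*$ have SVEP, so by Corollary \ref{c1} the pseudo B-Fredholm spectrum equals the generalized Drazin spectrum for both operators, and Corollary \ref{ee} closes the loop via $\sigma_{gD}(T)=\sigma_{gD}(T+F)$; alternatively one writes $\sigma_{pBF}=\sigma_{pBuF}\cup\sigma_{pBlF}$ and applies $(1)$ and $(2)$.

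The main obstacle is the preservation of SVEP under the perturbation: the two corollaries and the proposition quoted are purely spectral identities, so they cannot by themselves transfer a hypothesis phrased in terms of SVEP at every point. I would therefore make sure to include, as the one genuinely substantive step, the justification that $S(T)=\emptyset$ implies $S(T+F)=\emptyset$ (and similarly for the adjoints). The cleanest route is to invoke the inclusion $S(\cdot)\subseteq\sigma_p(\cdot)\subseteq\sigma_{ap}(\cdot)$ together with the fact, already used in Proposition \ref{pa}, that $\sigma_{ap}(T)$ and $\sigma_{ap}(T+F)$ differ only in isolated points (from \cite[Theorem 2.2]{ZZY}); since $S(T+F)$ is open, it cannot consist of isolated points unless it is empty, and every point of $S(T+F)$ not in $S(T)$ would have to be isolated in $\sigma_{ap}(T+F)$ — this forces $S(T+F)\subseteq S(T)=\emptyset$. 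Everything else is formal chaining of equalities.
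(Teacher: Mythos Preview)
Your approach is exactly the paper's: the paper gives no proof beyond the line ``Using Corollary~\ref{c1}, Proposition~\ref{pa} and Corollary~\ref{ee}, we can prove the following corollary,'' and you spell out precisely that chain of equalities. You are in fact more careful than the paper in isolating the one nontrivial point, namely that $T+F$ (resp.\ $(T+F)^*$) must again have the SVEP so that Corollary~\ref{c1} applies to the perturbed operator; the paper silently assumes this. Your primary justification is correct: since $F^n\in\mathcal{F}(X)$, the image of $F$ in the Calkin algebra is nilpotent, hence $F$ is a Riesz operator, and it is standard (see e.g.\ \cite{Aie}) that $S(T+R)=S(T)$ whenever $R$ is Riesz and commutes with $T$.

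One caution about your \emph{alternative} route to SVEP preservation: it does not work as written. From $\mathrm{acc}(\sigma_{ap}(T))=\mathrm{acc}(\sigma_{ap}(T+F))$ and the openness of $S(T+F)$ you obtain $S(T+F)\subseteq\mathrm{acc}(\sigma_{ap}(T+F))=\mathrm{acc}(\sigma_{ap}(T))\subseteq\sigma_{ap}(T)$, but this does \emph{not} yield $S(T+F)\subseteq S(T)$. A point $\lambda\in S(T+F)$ may perfectly well lie in $\sigma_{ap}(T)\cap\sigma_{ap}(T+F)$ without belonging to $S(T)$, so your assertion that ``every point of $S(T+F)$ not in $S(T)$ would have to be isolated in $\sigma_{ap}(T+F)$'' is unjustified. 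Since your Riesz-perturbation argument already does the job, simply drop the alternative.
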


Let $T\in \mathcal{B}(X)$, $Q$ a quasi-nilpotent such that $QT=TQ$, from \cite[Proposition 2.9]{ZZ},  we have
$\sigma_{lgD}(T+Q)=\sigma_{lgD}(T)$ and $\sigma_{rgD}(T+Q)=\sigma_{rgD}(T).$

\begin{proposition}\label{p0}
Let $T\in \mathcal{B}(X)$, $N$ a nilpotent operator commutes with $T$ then:
 $$\sigma_{pBuF}(T+N)\cup S(T)=\sigma_{pBuF}(T)\cup S(T)$$

\end{proposition}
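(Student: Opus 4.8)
The plan is to route everything through Theorem \ref{t11}, which asserts $\sigma_{lgD}(S)=\sigma_{pBuF}(S)\cup S(S)$ for every $S\in\mathcal{B}(X)$, together with the invariance of the left generalized Drazin spectrum under commuting quasi-nilpotent perturbations recorded just above, namely $\sigma_{lgD}(T+N)=\sigma_{lgD}(T)$ (a nilpotent operator being in particular quasi-nilpotent). The only extra ingredient is that the SVEP-failure set is unchanged by a commuting nilpotent perturbation: $S(T)=S(T+N)$. Granting this, the proposition follows from the chain
$$\sigma_{pBuF}(T+N)\cup S(T)=\sigma_{pBuF}(T+N)\cup S(T+N)=\sigma_{lgD}(T+N)=\sigma_{lgD}(T)=\sigma_{pBuF}(T)\cup S(T),$$
where the first equality uses $S(T)=S(T+N)$, the second and fourth are Theorem \ref{t11} (applied to $T+N$ and to $T$), and the third is the cited invariance.

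Thus the one step needing a genuine argument is $S(T)=S(T+N)$, which I would prove pointwise. Suppose $T$ has SVEP at $\lambda$, and let $f\colon U\to X$ be analytic on an open neighbourhood $U$ of $\lambda$ with $(T+N-zI)f(z)=0$ for all $z\in U$. Since $N$ commutes with $T$, hence with $T-zI$, we get $(T-zI)f(z)=-Nf(z)$ and, inductively, $(T-zI)^{m}f(z)=(-1)^{m}N^{m}f(z)$ for all $m\ge1$; choosing $m$ with $N^{m}=0$ gives $(T-zI)^{m}f(z)=0$ on $U$. Now $z\mapsto(T-zI)^{m-1}f(z)$ is analytic on $U$ (it is a polynomial in $z$ with constant operator coefficients applied to $f$) and is annihilated by $T-zI$, so SVEP of $T$ at $\lambda$ forces it to vanish identically; iterating this descent $m$ times yields $f\equiv0$, so $T+N$ has SVEP at $\lambda$. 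Applying the same reasoning to the operator $T+N$ and the nilpotent $-N$, which commutes with $T+N$ and satisfies $(T+N)+(-N)=T$, gives the reverse implication. Hence $S(T)=S(T+N)$.

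The main (and essentially only) obstacle is this SVEP-invariance lemma; nilpotency of $N$ is precisely what keeps it elementary, since it reduces the perturbed local equation to $(T-zI)^{m}f=0$, which localized SVEP dissolves by a finite chain of applications — this is where the argument would fail for a merely quasi-nilpotent perturbation without further work. Everything else is bookkeeping with identities already established in the paper; one could equivalently phrase the final chain through $\sigma_{lgD}=acc(\sigma_{ap})$ and $acc(\sigma_{ap}(T))=acc(\sigma_{ap}(T+N))$, but this still rests on Theorem \ref{t11}, so there is no shortcut around it.
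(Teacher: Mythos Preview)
Your proof is correct and follows essentially the same route as the paper: apply Theorem \ref{t11} to both $T$ and $T+N$, invoke the invariance $\sigma_{lgD}(T+N)=\sigma_{lgD}(T)$ under commuting (quasi-)nilpotent perturbations, and use $S(T+N)=S(T)$. The paper simply asserts $S(T+N)=S(T)$ without proof, whereas you supply the standard elementary argument for it, so your write-up is in fact more complete.
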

\begin{proof}
From  Theorem \ref{t11}, we have  $\sigma_{lgD}(T)=\sigma_{pBuF}(T)\cup S(T),$ then $\sigma_{lgD}(T+N)=\sigma_{pBuF}(T+N)\cup S(T+N)$
since  $\sigma_{lgD}(T+N)=\sigma_{lgD}(T)$ and $S(T+N)=S(T)$, hence $\sigma_{pBuF}(T+N)\cup S(T)=\sigma_{pBuF}(T)\cup S(T)$.
\end{proof}
By duality, we have the following proposition.
\begin{proposition}\label{p00}
Let $T\in \mathcal{B}(X)$, $N$ a nilpotent operator commutes with $T$ then:
 $$\sigma_{pBlF}(T+N)\cup S(T^*)=\sigma_{pBlF}(T)\cup S(T^*)$$
\end{proposition}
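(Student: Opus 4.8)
The plan is to mimic exactly the argument used for Proposition \ref{p0}, but now pass to adjoints at the outset. By duality (recall that $T$ is pseudo upper semi B-Fredholm if and only if $T^*$ is pseudo lower semi B-Fredholm, so that $\sigma_{pBlF}(T)=\sigma_{pBuF}(T^*)$), the statement to be proved is equivalent to $\sigma_{pBuF}(T^*+N^*)\cup S(T^*)=\sigma_{pBuF}(T^*)\cup S(T^*)$. So I would first observe that $N^*$ is nilpotent (with the same index as $N$) and commutes with $T^*$, and then apply Proposition \ref{p0} with $T^*$ in place of $T$ and $N^*$ in place of $N$. This immediately gives
$$\sigma_{pBuF}(T^*+N^*)\cup S(T^*)=\sigma_{pBuF}(T^*)\cup S(T^*),$$
and rewriting each side via $\sigma_{pBuF}(S^*)=\sigma_{pBlF}(S)$ (applied to $S=T+N$ on the left and $S=T$ on the right) yields the claim.

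Alternatively, and perhaps more cleanly, I would argue directly as in Proposition \ref{p0}, invoking Theorem \ref{t22} instead of Theorem \ref{t11}. Namely, Theorem \ref{t22} gives $\sigma_{rgD}(T)=\sigma_{pBlF}(T)\cup S(T^*)$ and likewise $\sigma_{rgD}(T+N)=\sigma_{pBlF}(T+N)\cup S((T+N)^*)$. Then I would use the two stability facts already recorded in the excerpt: $\sigma_{rgD}(T+N)=\sigma_{rgD}(T)$ (a nilpotent operator commuting with $T$ is in particular a commuting quasi-nilpotent perturbation, for which $\sigma_{rgD}$ is invariant by \cite[Proposition 2.9]{ZZ}), and $S((T+N)^*)=S(T^*)$. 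The latter identity is the one point that needs a word of justification: since $N^*$ is nilpotent and commutes with $T^*$, the SVEP of $T^*$ at a point $\lambda$ is equivalent to that of $T^*+N^*=(T+N)^*$ at $\lambda$ — nilpotent commuting perturbations do not change the set where SVEP fails — so $S((T+N)^*)=S(T^*)$.

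Combining these three equalities exactly as in the proof of Proposition \ref{p0}:
$$\sigma_{pBlF}(T+N)\cup S(T^*)=\sigma_{pBlF}(T+N)\cup S((T+N)^*)=\sigma_{rgD}(T+N)=\sigma_{rgD}(T)=\sigma_{pBlF}(T)\cup S(T^*).$$
The main (and only real) obstacle is the justification that $S$ is unchanged under a commuting nilpotent perturbation of the adjoint, i.e.\ $S((T+N)^*)=S(T^*)$; everything else is a formal substitution into Theorem \ref{t22} together with the already-cited invariance of $\sigma_{rgD}$ under commuting quasi-nilpotent perturbations. If one prefers to suppress even that remark, the cleanest write-up is simply: "By duality, apply Proposition \ref{p0} to $T^*$ and $N^*$," which is the one-line proof the paper will most likely give.
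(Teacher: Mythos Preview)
Your proposal is correct and matches the paper's own proof, which is literally the one-line ``By duality, we have the following proposition.'' Your anticipated duality argument (apply Proposition~\ref{p0} to $T^*$ and $N^*$, then use $\sigma_{pBuF}(S^*)=\sigma_{pBlF}(S)$) is exactly what the paper intends, and your alternative direct route via Theorem~\ref{t22} is simply the unwound version of the same idea.
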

As a consequence of Proposition \ref{p0} and Proposition \ref{p00}, the following corollary hold.
\begin{corollary}
Let $T\in \mathcal{B}(X)$, $N$ a nilpotent operator commute with $T$ then:
 $$\sigma_{pBF}(T+N)\cup S(T)\cup S(T^*)=\sigma_{pBF}(T)\cup S(T)\cup S(T^*)$$
\end{corollary}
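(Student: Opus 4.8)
The plan is to reduce the statement to the two already-proved identities of Propositions \ref{p0} and \ref{p00} by using the decomposition $\sigma_{pBF}(S)=\sigma_{pBuF}(S)\cup\sigma_{pBlF}(S)$ recorded in Section 2 (valid for every $S\in\mathcal{B}(X)$, since $S$ is pseudo $B$-Fredholm iff it is both pseudo upper and pseudo lower semi $B$-Fredholm), together with elementary manipulations of unions of sets.

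Since $N$ is nilpotent and commutes with $T$, Propositions \ref{p0} and \ref{p00} apply to the pair $(T,N)$ and give
$$\sigma_{pBuF}(T+N)\cup S(T)=\sigma_{pBuF}(T)\cup S(T)$$
and
$$\sigma_{pBlF}(T+N)\cup S(T^{*})=\sigma_{pBlF}(T)\cup S(T^{*}).$$
Now, using $\sigma_{pBF}(T+N)=\sigma_{pBuF}(T+N)\cup\sigma_{pBlF}(T+N)$ and the associativity and commutativity of the union, I would write
$$\sigma_{pBF}(T+N)\cup S(T)\cup S(T^{*})=\bigl(\sigma_{pBuF}(T+N)\cup S(T)\bigr)\cup\bigl(\sigma_{pBlF}(T+N)\cup S(T^{*})\bigr).$$
Substituting the two displayed identities turns the right-hand side into $\bigl(\sigma_{pBuF}(T)\cup S(T)\bigr)\cup\bigl(\sigma_{pBlF}(T)\cup S(T^{*})\bigr)$, which upon regrouping equals $\sigma_{pBuF}(T)\cup\sigma_{pBlF}(T)\cup S(T)\cup S(T^{*})=\sigma_{pBF}(T)\cup S(T)\cup S(T^{*})$, as claimed.

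I do not expect a genuine obstacle: no new analytic input is needed beyond Propositions \ref{p0} and \ref{p00} (which themselves rest on the invariance of $\sigma_{lgD}$, $\sigma_{rgD}$ and of $S(\cdot)$ under commuting nilpotent perturbations). The only point requiring care is bookkeeping with the unions—one must adjoin $S(T^{*})$ to the "upper" identity and $S(T)$ to the "lower" identity before combining them, and then reabsorb these sets at the end. An alternative and essentially equivalent route would be to start from the Corollary $\sigma_{gD}(T)=\sigma_{pBF}(T)\cup S(T)\cup S(T^{*})$ and combine it with $\sigma_{gD}(T+N)=\sigma_{gD}(T)$ (from $\sigma_{lgD}(T+N)=\sigma_{lgD}(T)$, $\sigma_{rgD}(T+N)=\sigma_{rgD}(T)$ for commuting quasi-nilpotent, hence nilpotent, $N$) together with $S(T+N)=S(T)$ and $S((T+N)^{*})=S(T^{*})$; I would mention this but carry out the first argument in detail.
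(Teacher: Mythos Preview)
Your argument is correct and matches the paper's approach: the corollary is stated there simply ``as a consequence of Proposition \ref{p0} and Proposition \ref{p00}'' with no further proof, and your union-bookkeeping via $\sigma_{pBF}=\sigma_{pBuF}\cup\sigma_{pBlF}$ is exactly the intended (and only natural) way to spell that out.
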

\begin{corollary}
Let $T\in\mathcal{B}(X)$, $N$ a nilpotent operator commutes with $T$. If $T$ has  SVEP, then
    $$\sigma_{pBuF}(T+N)=\sigma_{pBuF}(T)$$
\end{corollary}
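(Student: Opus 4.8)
The plan is to deduce this immediately from Proposition \ref{p0}, which already does all the work. Recall that Proposition \ref{p0} asserts $\sigma_{pBuF}(T+N)\cup S(T)=\sigma_{pBuF}(T)\cup S(T)$ whenever $N$ is a nilpotent operator commuting with $T$. The only extra ingredient is the definition of SVEP for $T$: by the convention fixed in the introduction, $T$ has SVEP precisely when $S(T)=\emptyset$. Substituting $S(T)=\emptyset$ into the equality of Proposition \ref{p0} gives $\sigma_{pBuF}(T+N)=\sigma_{pBuF}(T)$, which is the claim.

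For completeness I would also point out the parallel route through the localized-SVEP machinery, since it explains \emph{why} the statement is natural: by Corollary \ref{c1}(1), the hypothesis $S(T)=\emptyset$ yields $\sigma_{pBuF}(T)=\sigma_{lgD}(T)$. Moreover, since $N$ is nilpotent and commutes with $T$, one has $S(T+N)=S(T)=\emptyset$ (this identity is exactly what is used inside the proof of Proposition \ref{p0}), so $T+N$ also has SVEP and Corollary \ref{c1}(1) applies to it as well, giving $\sigma_{pBuF}(T+N)=\sigma_{lgD}(T+N)$. Finally, the invariance $\sigma_{lgD}(T+N)=\sigma_{lgD}(T)$ (recorded in the excerpt just before Proposition \ref{p0}, as a special case of the quasi-nilpotent commuting perturbation result) chains the two equalities together.

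There is essentially no obstacle here: the corollary is a one-line specialization, and the only thing to be careful about is invoking the correct reading of ``$T$ has SVEP'' as ``$S(T)=\emptyset$'' so that the $S(T)$ term drops out of the Proposition \ref{p0} identity. I would therefore keep the written proof to a single sentence citing Proposition \ref{p0} and the emptiness of $S(T)$, mentioning the Corollary \ref{c1} argument only as an alternative remark.
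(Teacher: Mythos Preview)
Your proposal is correct and matches the paper's intent: the corollary is stated without proof precisely because it is the one-line specialization of Proposition \ref{p0} under $S(T)=\emptyset$, exactly as you describe. The alternative route through Corollary \ref{c1} and the invariance of $\sigma_{lgD}$ under commuting nilpotent perturbations is also valid but unnecessary here.
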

\begin{corollary}
Let $T\in\mathcal{B}(X)$, $N$ a nilpotent operator commutes with $T$. If $T^*$ has  SVEP, then
    $$\sigma_{pBlF}(T+N)=\sigma_{pBlF}(T)$$
\end{corollary}

\begin{corollary}
Let $T\in \mathcal{B}(X)$, $N$ a nilpotent operator commutes with $T$,  if $T$ and $T^*$ have  SVEP, then
 $$\sigma_{pBF}(T+N)=\sigma_{pBF}(T)$$
\end{corollary}




\section{Left and Right generalized Drazin invertibility for operator matrices}

Let $X$ and $Y$ be   Banach spaces and $B(X,Y)$ denote the space of all bounded  linear operator from $X$ to $Y$.\\
For $A\in\mathcal{B}(X)$, $B\in\mathcal{B}(Y)$, we denote by $M_C\in\mathcal{B}(X\oplus Y)$ the operator  defined on $X\oplus Y$ by
$$
\begin{pmatrix}
A & C \\
0 & B \\
\end{pmatrix}
$$
It is well know that, in the case of infinite dimensional, the inclusion $\sigma(M_C)\subset\sigma(A)\cup\sigma(B)$, may be strict.\\
This motivates serval authors to study the defect ($\sigma_{*}(A)\cup\sigma_{*}(B))\setminus \sigma_{*}(M_C)$ where $\sigma_{*}$ runs different  type spectra.\\
In this section we interested  and motivated by the relationship between $\sigma_{*}(M_C)$ and $\sigma_{*}(A)\cup\sigma_{*}(B)$, where
$\sigma_{*} \in\{\sigma_{lgD}, \sigma_{rgD}, \sigma_{gD}, \sigma_{pBuF}, \sigma_{pBlF}, \sigma_{pBF}\}$.

We start this section by proving that the left generalized Drazin spectrum of a direct sum  is  the union of left generalized Drazin spectra of its summands.
\begin{proposition}
Let $(A, B)\in \mathcal{B}(X)\times \mathcal{B}(Y)$  and  $C\in\mathcal{B}(Y, X)$. Then :
$$\sigma_{lgD}(M_{0})=\sigma_{lgD}(A)\cup\sigma_{lgD}(B)$$
\end{proposition}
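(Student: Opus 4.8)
The plan is to prove the two inclusions separately, using the block-diagonal structure $M_0 = A \oplus B$ on $X \oplus Y$. For the easy inclusion $\sigma_{lgD}(M_0) \subseteq \sigma_{lgD}(A) \cup \sigma_{lgD}(B)$, I would take $\lambda \notin \sigma_{lgD}(A) \cup \sigma_{lgD}(B)$. Then both $A - \lambda I$ and $B - \lambda I$ are left generalized Drazin invertible, so by \cite[Proposition 3.2]{KMB} there are decompositions $X = X_1 \oplus X_2$ and $Y = Y_1 \oplus Y_2$ with $(A-\lambda I)_{|X_1}$, $(B-\lambda I)_{|Y_1}$ left invertible and $(A-\lambda I)_{|X_2}$, $(B-\lambda I)_{|Y_2}$ quasi-nilpotent. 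Then $M_0 - \lambda I = (A-\lambda I) \oplus (B-\lambda I)$ respects the decomposition $(X_1 \oplus Y_1) \oplus (X_2 \oplus Y_2)$ of $X \oplus Y$; the restriction to $X_1 \oplus Y_1$ is a direct sum of two left invertible operators, hence left invertible, and the restriction to $X_2 \oplus Y_2$ is a direct sum of two quasi-nilpotent operators, hence quasi-nilpotent (the spectrum of a direct sum being the union of the spectra). Applying \cite[Proposition 3.2]{KMB} in the reverse direction gives $\lambda \notin \sigma_{lgD}(M_0)$.

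For the reverse inclusion $\sigma_{lgD}(A) \cup \sigma_{lgD}(B) \subseteq \sigma_{lgD}(M_0)$, the cleanest route is via the characterization $\lambda \in \sigma_{lgD}(T) \Longleftrightarrow \lambda \in \mathrm{acc}(\sigma_{ap}(T))$ recorded in the excerpt (from \cite{KMB}). Since $M_0 = A \oplus B$, one has $\sigma_{ap}(M_0) = \sigma_{ap}(A) \cup \sigma_{ap}(B)$, and therefore
$$\mathrm{acc}(\sigma_{ap}(M_0)) = \mathrm{acc}\big(\sigma_{ap}(A) \cup \sigma_{ap}(B)\big) = \mathrm{acc}(\sigma_{ap}(A)) \cup \mathrm{acc}(\sigma_{ap}(B)),$$
using that accumulation points of a finite union equal the union of the accumulation points. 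This immediately yields $\sigma_{lgD}(M_0) = \sigma_{lgD}(A) \cup \sigma_{lgD}(B)$, giving both inclusions at once, so in fact this single argument suffices and the direct-sum decomposition argument above is only needed if one prefers to avoid the $\sigma_{ap}$-characterization.

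I expect the only mild obstacle to be justifying $\sigma_{ap}(A \oplus B) = \sigma_{ap}(A) \cup \sigma_{ap}(B)$ and the analogous statement for quasi-nilpotency (equivalently $\sigma(A \oplus B) = \sigma(A) \cup \sigma(B)$); both are standard facts about operators on direct sums, the approximate point spectrum case following since a bounded-below operator on $X \oplus Y$ restricts to bounded-below operators on each summand and conversely a direct sum of bounded-below operators is bounded below. Given the earlier material in the paper this is essentially immediate, so the proof is short and the real content is simply observing that $M_0$ is the direct sum $A \oplus B$ and invoking the $\mathrm{acc}(\sigma_{ap})$-description of $\sigma_{lgD}$.
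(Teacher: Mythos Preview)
Your proposal is correct, and the second argument you give --- via the characterization $\sigma_{lgD}(T)=\mathrm{acc}(\sigma_{ap}(T))$ together with $\sigma_{ap}(M_0)=\sigma_{ap}(A)\cup\sigma_{ap}(B)$ and $\mathrm{acc}(E\cup F)=\mathrm{acc}(E)\cup\mathrm{acc}(F)$ --- is exactly the paper's proof. The direct-sum decomposition argument you sketch for one inclusion is a valid alternative but, as you yourself note, superfluous once the $\mathrm{acc}(\sigma_{ap})$ route is taken.
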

\begin{proof}
$\lambda\in\sigma_{lgD}(M_0)$ if and only if $\lambda\in acc(\sigma_{ap}(M_0))$
 if and only if $\lambda\in acc(\sigma_{ap}(A)\cup\sigma_{ap}(B))=acc(\sigma_{ap}(A))\cup acc(\sigma_{ap}(B))$ if and only if $\lambda\in \sigma_{lgD}(A)\cup\sigma_{lgD}(B)$.
\end{proof}
By duality, we have:
\begin{proposition}
Let $(A, B)\in \mathcal{B}(X)\times \mathcal{B}(Y)$  and  $C\in\mathcal{B}(Y, X)$. Then :
$$\sigma_{rgD}(M_{0})=\sigma_{rgD}(A)\cup\sigma_{rgD}(B)$$
\end{proposition}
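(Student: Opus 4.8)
The plan is to mirror the proof of the previous proposition, using the duality between the left and right generalized Drazin spectra together with the behaviour of the surjectivity spectrum under direct sums. Recall from the excerpt that $\lambda \in \sigma_{rgD}(T) \Longleftrightarrow \lambda \in acc(\sigma_{su}(T))$, so the whole statement reduces to an identity about accumulation points of surjectivity spectra.

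First I would record the elementary fact that $\sigma_{su}(M_0) = \sigma_{su}(A) \cup \sigma_{su}(B)$. This is the dual of the corresponding statement $\sigma_{ap}(M_0) = \sigma_{ap}(A)\cup\sigma_{ap}(B)$ used in the preceding proof, and it follows immediately since $M_0 - \lambda I = (A-\lambda I)\oplus(B-\lambda I)$ is surjective precisely when both summands are surjective. Alternatively one invokes $\sigma_{su}(T) = \sigma_{ap}(T^*)$ together with $(M_0)^* = A^* \oplus B^*$ and the approximate-point-spectrum identity for direct sums. Either way this step is routine.

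Next I would use the general set-theoretic identity $acc(E \cup F) = acc(E) \cup acc(F)$ for arbitrary subsets $E, F$ of $\mathbb{C}$, exactly as in the proof of the left-hand analogue. Chaining the equivalences gives: $\lambda \in \sigma_{rgD}(M_0)$ iff $\lambda \in acc(\sigma_{su}(M_0)) = acc(\sigma_{su}(A)\cup\sigma_{su}(B)) = acc(\sigma_{su}(A)) \cup acc(\sigma_{su}(B))$ iff $\lambda \in \sigma_{rgD}(A) \cup \sigma_{rgD}(B)$, which is the desired equality.

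There is essentially no obstacle here; the statement is the exact dual of the preceding proposition, and the only thing worth being careful about is writing out the surjectivity-spectrum identity for the direct sum rather than quoting the approximate-point-spectrum one by mistake. A one-line proof along the following lines suffices:

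\begin{proof}
Since $M_0 = A \oplus B$ we have $\sigma_{su}(M_0) = \sigma_{su}(A) \cup \sigma_{su}(B)$, because $(A-\lambda I)\oplus(B-\lambda I)$ is surjective if and only if both $A-\lambda I$ and $B-\lambda I$ are. Hence $\lambda\in\sigma_{rgD}(M_0)$ if and only if $\lambda\in acc(\sigma_{su}(M_0))$ if and only if $\lambda\in acc(\sigma_{su}(A)\cup\sigma_{su}(B)) = acc(\sigma_{su}(A))\cup acc(\sigma_{su}(B))$ if and only if $\lambda\in\sigma_{rgD}(A)\cup\sigma_{rgD}(B)$.
\end{proof}
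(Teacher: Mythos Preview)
Your proof is correct and follows essentially the same approach as the paper. The paper's own proof of this proposition is simply the phrase ``By duality, we have:'', deferring entirely to the preceding proposition on $\sigma_{lgD}$; your argument is precisely the unfolded dual version of that proof, replacing $\sigma_{ap}$ by $\sigma_{su}$ and using $\sigma_{rgD}(T)=acc(\sigma_{su}(T))$ in place of $\sigma_{lgD}(T)=acc(\sigma_{ap}(T))$.
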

As a straightforward consequence, we have the result of H.Zariouh and H. Zghitti \cite{ZZ}.
\begin{corollary}\cite{ZZ}
Let $(A, B)\in \mathcal{B}(X)\times \mathcal{B}(Y)$  and  $C\in\mathcal{B}(Y, X)$. Then :
$$\sigma_{gD}(M_{0})=\sigma_{gD}(A)\cup\sigma_{gD}(B)$$
\end{corollary}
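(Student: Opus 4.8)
The plan is to derive the corollary directly from the two preceding propositions on the left and right generalized Drazin spectra of a direct sum, combined with the decomposition $\sigma_{gD}(T)=\sigma_{lgD}(T)\cup\sigma_{rgD}(T)$ recalled in the introduction. First I would write
$$\sigma_{gD}(M_0)=\sigma_{lgD}(M_0)\cup\sigma_{rgD}(M_0),$$
then apply the first proposition to replace $\sigma_{lgD}(M_0)$ by $\sigma_{lgD}(A)\cup\sigma_{lgD}(B)$ and the dual proposition to replace $\sigma_{rgD}(M_0)$ by $\sigma_{rgD}(A)\cup\sigma_{rgD}(B)$. After reassociating the four sets, the union splits as $\bigl(\sigma_{lgD}(A)\cup\sigma_{rgD}(A)\bigr)\cup\bigl(\sigma_{lgD}(B)\cup\sigma_{rgD}(B)\bigr)$, and applying the identity $\sigma_{gD}=\sigma_{lgD}\cup\sigma_{rgD}$ once more to each summand gives $\sigma_{gD}(A)\cup\sigma_{gD}(B)$, which is the claim.

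Alternatively, one can give a self-contained argument at the level of accumulation points of spectra: since $\sigma_{ap}(M_0)=\sigma_{ap}(A)\cup\sigma_{ap}(B)$ and $\sigma_{su}(M_0)=\sigma_{su}(A)\cup\sigma_{su}(B)$ for a direct sum, and since $acc$ distributes over finite unions, we get $acc(\sigma_{ap}(M_0))=acc(\sigma_{ap}(A))\cup acc(\sigma_{ap}(B))$ and similarly for $\sigma_{su}$. Using the characterizations $\lambda\in\sigma_{lgD}(T)\Leftrightarrow\lambda\in acc(\sigma_{ap}(T))$ and $\lambda\in\sigma_{rgD}(T)\Leftrightarrow\lambda\in acc(\sigma_{su}(T))$ from the introduction, together with $\sigma_{gD}=\sigma_{lgD}\cup\sigma_{rgD}$, the equality follows just as above. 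Either route is essentially a bookkeeping exercise.

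There is no real obstacle here; the only point requiring a word of care is the distributivity of $acc(\cdot)$ over finite unions (the set of accumulation points of a union of two sets is the union of their sets of accumulation points — true, and easy), and the fact that the direct sum spectra decompose additively, which is standard and already implicitly used in the proofs of the two preceding propositions. Since the statement is explicitly attributed to Zariouh and Zghitti, I would simply present it as the immediate corollary, citing the two propositions above, and keep the proof to a single line.

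\begin{proof}
By the two preceding propositions and the identity $\sigma_{gD}(T)=\sigma_{lgD}(T)\cup\sigma_{rgD}(T)$, we have
$$\sigma_{gD}(M_0)=\sigma_{lgD}(M_0)\cup\sigma_{rgD}(M_0)=\bigl(\sigma_{lgD}(A)\cup\sigma_{lgD}(B)\bigr)\cup\bigl(\sigma_{rgD}(A)\cup\sigma_{rgD}(B)\bigr),$$
and reassociating this union gives
$$\sigma_{gD}(M_0)=\bigl(\sigma_{lgD}(A)\cup\sigma_{rgD}(A)\bigr)\cup\bigl(\sigma_{lgD}(B)\cup\sigma_{rgD}(B)\bigr)=\sigma_{gD}(A)\cup\sigma_{gD}(B).$$
\end{proof}
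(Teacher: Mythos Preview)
Your proof is correct and follows exactly the route the paper intends: the corollary is stated there as ``a straightforward consequence'' of the two preceding propositions on $\sigma_{lgD}(M_0)$ and $\sigma_{rgD}(M_0)$, combined with $\sigma_{gD}=\sigma_{lgD}\cup\sigma_{rgD}$. Your one-line derivation is precisely what the authors have in mind.
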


\begin{proposition}
Let $(A, B)\in \mathcal{B}(X)\times \mathcal{B}(Y)$  and  $C\in\mathcal{B}(Y, X)$.
\begin{enumerate}
  \item If $A$  is invertible, then $M_C$ is left generalized Drazin invertible if and only if $B$ is left generalized Drazin invertible.
  \item If $B$  is invertible, then $M_C$ is right  generalized Drazin invertible if and only if $A$ is right generalized Drazin invertible.
\end{enumerate}
\end{proposition}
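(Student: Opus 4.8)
The plan is to exploit the block-triangular structure of $M_C$ together with the characterizations $\lambda\in\sigma_{lgD}(T)\Longleftrightarrow\lambda\in acc(\sigma_{ap}(T))$ and $\lambda\in\sigma_{rgD}(T)\Longleftrightarrow\lambda\in acc(\sigma_{su}(T))$ recalled in the introduction. By symmetry (passing to adjoints, under which $M_C$ becomes a lower-triangular matrix, equivalently an upper-triangular one after swapping the summands, and $\sigma_{lgD}(T^*)=\sigma_{rgD}(T)$), it suffices to prove (1); then (2) follows by applying (1) to $M_C^*=\begin{pmatrix}A^* & 0\\ C^* & B^*\end{pmatrix}$, which is unitarily equivalent to an upper triangular matrix with diagonal entries $B^*,A^*$, with $B^*$ invertible.

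For (1), assume $A$ is invertible. First I would record the well-known fact that when $A$ is invertible, $M_C$ is similar to $M_0=A\oplus B$: indeed the invertibility of $A$ lets one solve the Rosenblum/Sylvester-type equation $AD-DB=-C$ for $D\in\mathcal{B}(Y,X)$ (take $D=-\sum_{n\ge 0}A^{-(n+1)}CB^{n}$ if $\|B\|<\|A^{-1}\|^{-1}$, or more robustly use that $\lambda\mapsto A^{-1}$ is analytic on a neighbourhood of $\sigma(B)$ and invoke the operator-valued Riesz functional calculus, since $\sigma(A^{-1})\cap\sigma(B)$ need not be empty — so in general one argues directly that the similarity transform $S=\begin{pmatrix}I & D\\ 0 & I\end{pmatrix}$ conjugates $M_C$ to $M_0$ whenever such a $D$ exists, and existence is guaranteed precisely when $\sigma(A)\cap\sigma(B)$-type obstructions vanish). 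Since that last point is exactly the main obstacle, the cleaner route is to avoid full similarity and instead argue at the level of the relevant spectrum directly: show $acc(\sigma_{ap}(M_C))=acc(\sigma_{ap}(B))$.

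So the key steps are: (i) show $\sigma_{ap}(M_C)\subseteq\sigma_{ap}(A)\cup\sigma_{ap}(B)$ always, and that when $A$ is invertible $\sigma_{ap}(M_C)$ differs from $\sigma_{ap}(B)$ by at most a subset of $\sigma_{ap}(A)=\sigma(A)$ which is \emph{disjoint from a neighbourhood of each of its points that we care about} — more precisely, use the classical fact that for $\lambda\notin\sigma(A)$, $M_C-\lambda I$ is bounded below on $X\oplus Y$ if and only if $B-\lambda I$ is bounded below on $Y$ (this is a one-line computation: if $(A-\lambda)x+Cy$ and $(B-\lambda)y$ are both small, then smallness of $(B-\lambda)y$ controls $y$ hence $Cy$, then invertibility of $A-\lambda$ controls $x$); hence $\sigma_{ap}(M_C)\setminus\sigma(A)=\sigma_{ap}(B)\setminus\sigma(A)$. (ii) Conclude that $\sigma_{ap}(M_C)\subseteq\sigma_{ap}(B)\cup\sigma(A)$ and $\sigma_{ap}(B)\setminus\sigma(A)\subseteq\sigma_{ap}(M_C)$. (iii) Take accumulation points: since $A$ is invertible, $0\notin\sigma(A)$, and I would check that $acc(\sigma_{ap}(M_C))$ and $acc(\sigma_{ap}(B))$ agree at $\lambda=0$ — which is all that is needed, because $M_C$ is left generalized Drazin invertible iff $0\notin acc(\sigma_{ap}(M_C))$ and similarly for $B$. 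Near $0$ the spectra $\sigma_{ap}(M_C)$ and $\sigma_{ap}(B)$ coincide by step (i) (as $0$ has a neighbourhood disjoint from $\sigma(A)$), so $0\in acc(\sigma_{ap}(M_C))\Longleftrightarrow 0\in acc(\sigma_{ap}(B))$, giving the equivalence in (1).

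The main obstacle, as flagged, is that $A$ invertible does \emph{not} by itself make $M_C$ globally similar to $A\oplus B$ (the Sylvester equation need not be solvable when $\sigma(A)\cap\sigma(B)\neq\emptyset$), so one must localize the argument at $\lambda=0$ and use only the local coincidence of approximate point spectra near $0$, which is elementary and avoids the functional-calculus subtlety entirely. Everything else is a routine bounded-below estimate plus the dictionary between $\sigma_{lgD}$, $\sigma_{rgD}$ and accumulation points of $\sigma_{ap}$, $\sigma_{su}$ already quoted from \cite{KMB}.
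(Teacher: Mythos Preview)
Your approach is essentially the same as the paper's: both localize at $\lambda=0$, use that $A$ invertible gives a neighbourhood of $0$ disjoint from $\sigma(A)$, invoke the elementary fact that when $A-\lambda$ is invertible $M_C-\lambda$ is bounded below iff $B-\lambda$ is bounded below (the paper quotes this from \cite{ZZW} rather than proving it in-line), and then translate via $\sigma_{lgD}=acc(\sigma_{ap})$ and duality for (2). The Sylvester/similarity detour you raise and correctly discard does not appear in the paper at all, so you can drop it without loss.
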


\begin{proof}
$ 1)$ Suppose that $M_C$ is left generalized Drazin invertible, then $0\in acc (\sigma_{ap}(M_C))$,  thus there exists $\varepsilon> 0$ such that $M_C-\lambda I$ is bounded below  for every $\lambda, 0<|\lambda|<\varepsilon$. Since $A$ is invertible, then there exists $\beta> 0$ such that $A-\lambda I$ is invertible  for every $\lambda, |\lambda|<\beta$. Let $\eta=min(\varepsilon, \beta)$, then $A-\lambda I$ is invertible for every $\lambda, |\lambda|<\eta$ and $M_C-\lambda I$ is bounded below  for every $\lambda, 0<|\lambda|<\eta$. Hence $B-\lambda I$ is bounded below  for every $\lambda, 0<|\lambda|<\eta$, by \cite [Lemma 2.7]{ZZW}.\\
By duality, we have  $2)$.
\end{proof}

\begin{theorem}
Let $(A, B)\in \mathcal{B}(X)\times \mathcal{B}(Y)$  and  $C\in\mathcal{B}(Y, X)$. Then :
$$\sigma_{lgD}(A)\subseteq\sigma_{lgD}(M_C)\subseteq\sigma_{lgD}(A)\cup\sigma_{lgD}(B)\subseteq\sigma_{lgD}(M_C)\cup\sigma_{lgD}(B)$$
\end{theorem}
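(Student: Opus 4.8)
The plan is to establish the chain of four inclusions one link at a time, reading it as three separate claims: $\sigma_{lgD}(A)\subseteq\sigma_{lgD}(M_C)$; then $\sigma_{lgD}(M_C)\subseteq\sigma_{lgD}(A)\cup\sigma_{lgD}(B)$; and finally $\sigma_{lgD}(A)\cup\sigma_{lgD}(B)\subseteq\sigma_{lgD}(M_C)\cup\sigma_{lgD}(B)$. The last one is immediate since it is just $\sigma_{lgD}(A)\subseteq\sigma_{lgD}(M_C)\cup\sigma_{lgD}(B)$, which follows at once from the first inclusion. So the real content is the first two links, and I would phrase both using the characterization recalled in the introduction, namely $\lambda\in\sigma_{lgD}(S)\Longleftrightarrow\lambda\in\mathrm{acc}(\sigma_{ap}(S))$, together with the relationship between the approximate point spectrum of $M_C$ and those of $A$ and $B$.

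For the middle inclusion $\sigma_{lgD}(M_C)\subseteq\sigma_{lgD}(A)\cup\sigma_{lgD}(B)$: suppose $\lambda\notin\sigma_{lgD}(A)\cup\sigma_{lgD}(B)$. Then $\lambda\notin\mathrm{acc}(\sigma_{ap}(A))$ and $\lambda\notin\mathrm{acc}(\sigma_{ap}(B))$, so there is a punctured disc $D^*(\lambda,\varepsilon)$ on which both $A-\mu I$ and $B-\mu I$ are bounded below. I would then invoke the standard fact (essentially \cite[Lemma 2.7]{ZZW} as already used in this section) that if $A-\mu I$ and $B-\mu I$ are both bounded below then $M_C-\mu I$ is bounded below; hence $\mu\notin\sigma_{ap}(M_C)$ for every $\mu\in D^*(\lambda,\varepsilon)$, which says precisely $\lambda\notin\mathrm{acc}(\sigma_{ap}(M_C))$, i.e. $\lambda\notin\sigma_{lgD}(M_C)$.

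For the first inclusion $\sigma_{lgD}(A)\subseteq\sigma_{lgD}(M_C)$: I would take the contrapositive and show that $\lambda\notin\sigma_{lgD}(M_C)$ forces $\lambda\notin\sigma_{lgD}(A)$. If $\lambda\notin\sigma_{lgD}(M_C)$, then $M_C-\mu I$ is bounded below on some punctured disc $D^*(\lambda,\varepsilon)$. The key elementary observation is that the restriction of $M_C-\mu I$ to the first coordinate copy $X\oplus\{0\}$ acts as $A-\mu I$, and boundedness below of $M_C-\mu I$ passes to this invariant subspace: if $(A-\mu I)x=0$ then $(M_C-\mu I)(x,0)=((A-\mu I)x,0)=0$, so $x=0$, and the lower bound is inherited as well because $\|(M_C-\mu I)(x,0)\|=\|(A-\mu I)x\|$. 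Hence $A-\mu I$ is bounded below on $D^*(\lambda,\varepsilon)$, giving $\lambda\notin\mathrm{acc}(\sigma_{ap}(A))=\sigma_{lgD}(A)$.

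The main obstacle — really the only place where care is needed — is the lemma used in the middle step: that bounded-below-ness of the two diagonal entries is enough to force bounded-below-ness of $M_C-\mu I$ regardless of $C$. One should be careful that this is the \emph{correct} direction (it is the easy direction; the converse can fail and is exactly what forces the strict inclusions and the hypotheses of the later theorems), and I would cite it from \cite{ZZW} rather than reprove it. Everything else is bookkeeping with punctured discs and the accumulation-point description of $\sigma_{lgD}$.
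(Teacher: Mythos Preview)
Your proposal is correct and follows essentially the same route as the paper: both arguments translate $\sigma_{lgD}$ into $\mathrm{acc}(\sigma_{ap})$, pass to punctured discs, and then use the standard lemma relating bounded-below-ness of $M_C-\mu I$ to that of $A-\mu I$ and $B-\mu I$ (the paper cites \cite[Theorem 3.5]{ZZZ} rather than \cite{ZZW}, but the content is the same). Your treatment of the third inclusion is in fact cleaner than the paper's: you observe that it is an immediate consequence of the first inclusion, whereas the paper reproves it directly from the punctured-disc argument.
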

\begin{proof}
 Let $\lambda=0\notin \sigma_{lgD}(A)\cup\sigma_{lgD}(B)$, then $0\notin acc(\sigma_{ap}(A)\cup\sigma_{ap}(B))=acc(\sigma_{ap}(A))\cup acc(\sigma_{ap}(B))$. Thus there exists $\varepsilon> 0$ such that  for any $\lambda$, $0< |\lambda|< \varepsilon$, we have $A-\lambda I$ and $B-\lambda I$ are bounded below.  According to \cite[Theorem 3.5]{ZZZ}, we have  $M_C-\lambda I$  is bounded below for any $\lambda$, $0< |\lambda|< \varepsilon$, thus $0\notin \sigma_{lgD}(M_C)$. Therefore $\sigma_{lgD}(M_C)\subseteq \sigma_{lgD}(A)\cup\sigma_{lgD}(B)$.\\
 If $0\notin \sigma_{lgD}(M_C)$,  then $0\notin acc(\sigma_{ap}(M_C))$, as a result, there exists $\varepsilon> 0$ such that  for any $\lambda$, $0< |\lambda|< \varepsilon$, we have $M_C-\lambda I$ is bounded below, then $A-\lambda I$  is bounded below for any $\lambda$, $0< |\lambda|< \varepsilon$ by \cite[Theorem 3.5]{ZZZ}, thus $0\notin \sigma_{lgD}(A)$. Therefore $\sigma_{lgD}(A)\subseteq\sigma_{lgD}(M_C)$ \\
 Let $\lambda=0\notin \sigma_{lgD}(M_C)\cup\sigma_{lgD}(B)$, then $0\notin acc(\sigma_{ap}(M_C)\cup\sigma_{ap}(B))=acc(\sigma_{ap}(M_C))\cup acc(\sigma_{ap}(B))$. Thus there exists $\varepsilon> 0$ such that  for any $\lambda$, $0< |\lambda|< \varepsilon$, we have $M_C-\lambda I$ and $B-\lambda I$ are bounded below.  According to \cite[Theorem 3.5]{ZZZ}, we have  $A-\lambda I$  is bounded below for any $\lambda$, $0< |\lambda|< \varepsilon$, thus $0\notin \sigma_{lgD}(A)$. Therefore $\sigma_{lgD}(A)\cup\sigma_{lgD}(B)\subseteq\sigma_{lgD}(M_C)\cup\sigma_{lgD}(B)$.\\
 \end{proof}
 By duality, we can prove the following Theorem.
 \begin{theorem}
Let $(A, B)\in \mathcal{B}(X)\times \mathcal{B}(Y)$  and  $C\in\mathcal{B}(Y, X)$. Then :
$$\sigma_{rgD}(B)\subseteq\sigma_{rgD}(M_C)\subseteq\sigma_{rgD}(A)\cup\sigma_{rgD}(B)\subseteq\sigma_{rgD}(M_C)\cup\sigma_{rgD}(A)$$
\end{theorem}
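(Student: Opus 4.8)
The plan is to derive this statement from the preceding Theorem (the $\sigma_{lgD}$ version just proved) by a duality argument, as the phrase ``by duality'' suggests. I will use two facts that are already available in the excerpt: first, $\sigma_{rgD}(T)=\sigma_{lgD}(T^{*})$ for every bounded operator $T$, which is the relation exploited in the proof of Theorem~\ref{t22}; and second, all the generalized Drazin spectra are invariant under similarity, since $\sigma_{lgD}(\cdot)$ and $\sigma_{rgD}(\cdot)$ are governed by $acc(\sigma_{ap}(\cdot))$ and $acc(\sigma_{su}(\cdot))$, which are similarity invariants.

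First I would take adjoints. Identifying $(X\oplus Y)^{*}$ with $X^{*}\oplus Y^{*}$ in the canonical way, a direct computation gives
$$
M_{C}^{*}=\begin{pmatrix} A^{*} & 0\\ C^{*} & B^{*} \end{pmatrix}\quad\mbox{on } X^{*}\oplus Y^{*},
$$
which is lower triangular. Conjugating by the flip isomorphism $J\colon X^{*}\oplus Y^{*}\to Y^{*}\oplus X^{*}$, $J(f,g)=(g,f)$, one checks that $JM_{C}^{*}J^{-1}$ equals
$$
\widetilde{M}=\begin{pmatrix} B^{*} & C^{*}\\ 0 & A^{*} \end{pmatrix}\quad\mbox{on } Y^{*}\oplus X^{*},
$$
which is precisely an operator matrix of the type treated in the previous Theorem, with ``$A$'' $=B^{*}\in\mathcal{B}(Y^{*})$, ``$B$'' $=A^{*}\in\mathcal{B}(X^{*})$, and ``$C$'' $=C^{*}\in\mathcal{B}(X^{*},Y^{*})$ (indeed $C^{*}$ maps $X^{*}$ into $Y^{*}$ because $C\in\mathcal{B}(Y,X)$).

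Applying the previous Theorem to $\widetilde{M}$ then yields
$$
\sigma_{lgD}(B^{*})\subseteq\sigma_{lgD}(\widetilde{M})\subseteq\sigma_{lgD}(B^{*})\cup\sigma_{lgD}(A^{*})\subseteq\sigma_{lgD}(\widetilde{M})\cup\sigma_{lgD}(A^{*}).
$$
Now I translate each term back. By similarity invariance $\sigma_{lgD}(\widetilde{M})=\sigma_{lgD}(M_{C}^{*})$, and the duality relation converts $\sigma_{lgD}(M_{C}^{*})$, $\sigma_{lgD}(A^{*})$, $\sigma_{lgD}(B^{*})$ into $\sigma_{rgD}(M_{C})$, $\sigma_{rgD}(A)$, $\sigma_{rgD}(B)$ respectively. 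Substituting, and rewriting $\sigma_{rgD}(B)\cup\sigma_{rgD}(A)$ as $\sigma_{rgD}(A)\cup\sigma_{rgD}(B)$, gives exactly the asserted chain $\sigma_{rgD}(B)\subseteq\sigma_{rgD}(M_{C})\subseteq\sigma_{rgD}(A)\cup\sigma_{rgD}(B)\subseteq\sigma_{rgD}(M_{C})\cup\sigma_{rgD}(A)$.

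The main thing to watch is the bookkeeping in this transfer: the adjoint of an upper triangular matrix is lower triangular, so the flip $J$ is genuinely needed to recover the standard upper triangular form, and this flip swaps the two diagonal blocks --- which is exactly why $\sigma_{rgD}(B)$, rather than $\sigma_{rgD}(A)$, ends up as the leftmost set, dual to the fact that $\sigma_{lgD}(A)$ is leftmost in the previous Theorem. An alternative route that avoids adjoints is to repeat the proof of the previous Theorem line by line, replacing ``bounded below'' by ``surjective'', $\sigma_{ap}$ by $\sigma_{su}$, and \cite[Theorem 3.5]{ZZZ} by its surjectivity analogue for upper triangular operator matrices, together with the equivalence $\lambda\in\sigma_{rgD}(T)\Longleftrightarrow\lambda\in acc(\sigma_{su}(T))$; in that version the only real point is to have the correct dual of \cite[Theorem 3.5]{ZZZ} at hand.
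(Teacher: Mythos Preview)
Your proposal is correct and follows exactly the ``by duality'' approach that the paper indicates (the paper gives no further details beyond that phrase). You have simply spelled out the adjoint/flip bookkeeping that the paper leaves implicit, and your alternative $\sigma_{su}$-based rewriting of the previous proof is an equally valid way to unpack the same duality.
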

The  inclusion,  $\sigma_{rgD}(M_C)\subseteq\sigma_{rgD}(A)\cup\sigma_{rgD}(B)$,  may be strict as we can see in the following example.
\begin{example}
Let $A, B, C\in\mathcal{B}(l_2)$ defined by:
$$ Ae_n=e_{n+1}.$$
$$ B=A^*.$$
$$ C=e_0\otimes e_0.$$
 where $\{e_n\}_{n\in\mathbb{N}}$ is the unit vector basis of $l_2$. We have $\sigma_{su}(A)=\{\lambda\in\mathbb{C}; |\lambda|\leq 1\}$, then $\sigma_{rgD}(A)=\{\lambda\in\mathbb{C}; |\lambda|\leq 1\}$. Since  $M_C$ is unitary,  then $\sigma_{rgD}(M_C)\subseteq\{\lambda\in\mathbb{C}; |\lambda|= 0\}$.  So  $0\notin\sigma_{rgD}(M_C)$, but $0\in\sigma_{rgD}(A)\cup\sigma_{rgD}(B)$. Notes that $A^*=B$ has not the SVEP.
\end{example}
\begin{remark}
Also,  we can show that the inclusion $\sigma_{lgD}(M_C)\subset\sigma_{lgD}(A)\cup\sigma_{lgD}(B)$ is strict.
\end{remark}

Now, in the next results, we give a sufficient conditions which ensures that  $\sigma_{rgD}(M_C)\subset\sigma_{rgD}(A)\cup\sigma_{rgD}(B)$ and
 $\sigma_{lgD}(M_C)\subset\sigma_{lgD}(A)\cup\sigma_{lgD}(B)$ for every $C\in\mathcal{B}(Y,X)$.
\begin{proposition}\label{ppp}
Let $A\in \mathcal{B}(X)$ such that $A^*$ has the SVEP, then for every $B\in \mathcal{B}(Y)$ and  $C\in\mathcal{B}(Y, X)$, we have:
$$\sigma_{lgD}(M_{C})=\sigma_{lgD}(A)\cup\sigma_{lgD}(B)$$
\end{proposition}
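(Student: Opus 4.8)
The plan is to combine the two-sided inclusion already proved in the Theorem immediately preceding this proposition with a local argument that uses the SVEP of $A^{*}$. That theorem gives, for every $C$, $\sigma_{lgD}(A)\subseteq\sigma_{lgD}(M_{C})\subseteq\sigma_{lgD}(A)\cup\sigma_{lgD}(B)$, so the only thing left to establish is the reverse inclusion $\sigma_{lgD}(A)\cup\sigma_{lgD}(B)\subseteq\sigma_{lgD}(M_{C})$. I would prove this by contraposition: fix $\lambda_{0}\notin\sigma_{lgD}(M_{C})$ and show $\lambda_{0}\notin\sigma_{lgD}(A)\cup\sigma_{lgD}(B)$. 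After translating $A$, $B$, $M_{C}$ by $\lambda_{0}I$ — which leaves the hypothesis that $A^{*}$ has SVEP untouched, since SVEP at every point is translation invariant — I may assume $\lambda_{0}=0$, so that $0\notin acc(\sigma_{ap}(M_{C}))$ and hence there is $\varepsilon>0$ with $M_{C}-\mu I$ bounded below for all $\mu\in D^{*}(0,\varepsilon)$.

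Next, for each such $\mu$, \cite[Theorem 3.5]{ZZZ} forces $A-\mu I$ to be bounded below; thus $\sigma_{ap}(A)\cap D^{*}(0,\varepsilon)=\emptyset$ and already $0\notin acc(\sigma_{ap}(A))=\sigma_{lgD}(A)$. The SVEP hypothesis is then invoked to upgrade ``bounded below'' to ``invertible'' on the punctured disc: since $A^{*}$ has SVEP one has $\sigma(A)=\sigma_{ap}(A)$ (the standard fact that SVEP of $T^{*}$ yields $\sigma(T)=\sigma_{ap}(T)$; see \cite{Aie}), so $A-\mu I$ is invertible for every $\mu\in D^{*}(0,\varepsilon)$. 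Now, for $\mu\in D^{*}(0,\varepsilon)$, $M_{C}-\mu I$ is bounded below and $A-\mu I$ is invertible, whence \cite[Lemma 2.7]{ZZW} (equivalently, the factorization of $M_{C}-\mu I$ through the invertible corner $A-\mu I$, which reduces it to $I_{X}\oplus(B-\mu I)$) gives that $B-\mu I$ is bounded below. Hence $\sigma_{ap}(B)\cap D^{*}(0,\varepsilon)=\emptyset$, so $0\notin acc(\sigma_{ap}(B))=\sigma_{lgD}(B)$. Combining the two conclusions, $0\notin\sigma_{lgD}(A)\cup\sigma_{lgD}(B)$, which establishes the missing inclusion and hence the equality. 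The $\sigma_{rgD}$-analogue (under the hypothesis that $B$ has SVEP) then follows by the duality $\sigma_{rgD}(M_{C})=\sigma_{lgD}(M_{C}^{*})$ applied to the flipped upper-triangular form of $M_{C}^{*}$.

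The only genuinely delicate step — and the sole point where the hypothesis is used — is the passage from ``$A-\mu I$ bounded below'' to ``$A-\mu I$ invertible''. Without SVEP of $A^{*}$ this can fail, and then the inclusion $\sigma_{lgD}(M_{C})\subseteq\sigma_{lgD}(A)\cup\sigma_{lgD}(B)$ becomes strict, as the shift example in the excerpt shows (there $A$ is the forward shift and $A^{*}$ the backward shift, which lacks SVEP). Everything else is routine: the inclusion borrowed from the preceding theorem, the one-way stability $\sigma_{ap}(A)\subseteq\sigma_{ap}(M_{C})$, and the standard description of $M_{C}-\mu I$ when its $(1,1)$-corner is invertible.
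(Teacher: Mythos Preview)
Your argument is correct. The paper, however, takes a shorter path: it invokes \cite[Corollary 3.13]{ZZZ}, which, under the hypothesis that $A^{*}$ has SVEP, yields directly the global identity $\sigma_{ap}(M_{C})=\sigma_{ap}(A)\cup\sigma_{ap}(B)$ for every $C$; taking accumulation points on both sides and using $acc(E\cup F)=acc(E)\cup acc(F)$ then finishes the proof in one line, without appealing to the preceding theorem at all. You instead unpack the mechanism locally: on a punctured disc where $M_{C}-\mu I$ is bounded below you use SVEP of $A^{*}$ (via $\sigma(A)=\sigma_{ap}(A)$) to upgrade $A-\mu I$ from bounded below to invertible, and then factor $M_{C}-\mu I$ through the invertible $(1,1)$-corner to conclude that $B-\mu I$ is bounded below. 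Both routes rest on the same key idea --- SVEP of $A^{*}$ converting ``bounded below'' into ``invertible'' so that the triangular factorization applies --- but the paper packages this into a single cited spectral identity and then passes to accumulation points, whereas you reprove what is needed pointwise. Your approach is more self-contained (it relies only on the elementary building blocks \cite[Theorem 3.5]{ZZZ} and \cite[Lemma 2.7]{ZZW} already used in the preceding theorem), while the paper's is considerably shorter.
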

\begin{proof}
$\lambda\in\sigma_{lgD}(M_C)$ if and only if $\lambda\in acc(\sigma_{ap}(M_C))$, according to \cite[Corollary 3.13]{ZZZ}, we have $\lambda\in acc(\sigma_{ap}(A)\cup\sigma_{ap}(B))$, since $acc(\sigma_{ap}(A)\cup\sigma_{ap}(B))=acc(\sigma_{ap}(A))\cup acc(\sigma_{ap}(B))$, then $\lambda\in acc(\sigma_{ap}(M_C))$ if and only if $\lambda\in acc(\sigma_{ap}(A))\cup acc(\sigma_{ap}(B))$ if and only if $\lambda\in \sigma_{lgD}(A)\cup\sigma_{lgD}(B)$.
\end{proof}
By duality, we have :
\begin{proposition}\label{pppp}
Let $B\in \mathcal{B}(X)$ such that $B$ has the SVEP, then for every $A\in \mathcal{B}(X)$ and $C\in\mathcal{B}(Y, X)$, we have:
$$\sigma_{rgD}(M_{C})=\sigma_{rgD}(A)\cup\sigma_{rgD}(B)$$
\end{proposition}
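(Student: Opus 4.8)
The plan is to carry out, on the surjectivity side, the exact dual of the argument used for Proposition~\ref{ppp}. Recall from the preliminaries that $\lambda\in\sigma_{rgD}(T)$ if and only if $\lambda\in acc(\sigma_{su}(T))$. Hence the whole proposition reduces to establishing that the hypothesis ``$B$ has the SVEP'' forces the identity
$$\sigma_{su}(M_C)=\sigma_{su}(A)\cup\sigma_{su}(B).$$
Granting this, one concludes exactly as in Proposition~\ref{ppp}: since $acc(\cdot)$ distributes over finite unions, $acc(\sigma_{su}(M_C))=acc(\sigma_{su}(A))\cup acc(\sigma_{su}(B))$, and rewriting each $acc(\sigma_{su}(\cdot))$ as the corresponding right generalized Drazin spectrum gives $\lambda\in\sigma_{rgD}(M_C)$ iff $\lambda\in\sigma_{rgD}(A)\cup\sigma_{rgD}(B)$.

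To prove the displayed identity I would first record the inclusions $\sigma_{su}(B)\subseteq\sigma_{su}(M_C)\subseteq\sigma_{su}(A)\cup\sigma_{su}(B)$, valid for every $C$ (the surjectivity counterpart of \cite[Theorem~3.5]{ZZZ}: if $A-\lambda I$ and $B-\lambda I$ are onto then so is $M_C-\lambda I$, and conversely $M_C-\lambda I$ onto forces $B-\lambda I$ onto). These already give ``$\subseteq$'' in the identity together with $\sigma_{su}(B)\subseteq\sigma_{su}(M_C)$, so it remains only to check $\sigma_{su}(A)\subseteq\sigma_{su}(M_C)$ under the SVEP hypothesis. Fix $\lambda\notin\sigma_{su}(M_C)$; then $M_C-\lambda I$ is onto, hence $B-\lambda I$ is onto, and since $B$ has the SVEP at $\lambda$ a surjective operator with the SVEP at a point is bijective, so $N(B-\lambda I)=\{0\}$. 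A short diagram chase on $\begin{pmatrix}A-\lambda I & C\\ 0 & B-\lambda I\end{pmatrix}$ then gives $R(A-\lambda I)=X$: for $u\in X$ choose $(x,y)$ with $(A-\lambda I)x+Cy=u$ and $(B-\lambda I)y=0$, whence $y=0$ and $(A-\lambda I)x=u$. Thus $\lambda\notin\sigma_{su}(A)$, as wanted. (Equivalently, this identity is just the surjectivity-spectrum analogue of \cite[Corollary~3.13]{ZZZ}, which one may instead invoke directly.)

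A second, coordinate-free route is by passing to adjoints: $\sigma_{rgD}(M_C)=\sigma_{lgD}(M_C^{*})$, and under the canonical identification $(X\oplus Y)^{*}=X^{*}\oplus Y^{*}$ followed by the flip isometry $X^{*}\oplus Y^{*}\to Y^{*}\oplus X^{*}$ the operator $M_C^{*}$ is unitarily equivalent to the upper triangular matrix $\begin{pmatrix}B^{*} & C^{*}\\ 0 & A^{*}\end{pmatrix}$ on $Y^{*}\oplus X^{*}$, to which Proposition~\ref{ppp} applies; reading the conclusion back through $\sigma_{lgD}(B^{*})=\sigma_{rgD}(B)$ and $\sigma_{lgD}(A^{*})=\sigma_{rgD}(A)$ yields the statement. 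I expect the one genuinely delicate point to be the passage ``$B-\lambda I$ onto plus $B$ has the SVEP at $\lambda$ implies $B-\lambda I$ bijective'' (equivalently, in the adjoint formulation, making sure that ``$B$ has the SVEP'' is exactly the hypothesis needed on the top-left entry $B^{*}$ of the adjoint matrix, and not some condition on a bidual); everything past that point is the routine bookkeeping with $acc(\cdot)$ already carried out in Proposition~\ref{ppp}.
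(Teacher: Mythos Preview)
Your proposal is correct and essentially matches the paper, which proves Proposition~\ref{pppp} with the single phrase ``By duality''. Your first route---establishing $\sigma_{su}(M_C)=\sigma_{su}(A)\cup\sigma_{su}(B)$ under the hypothesis that $B$ has the SVEP, then taking accumulation points---is the natural unpacking of that phrase: it is the mirror of the proof of Proposition~\ref{ppp}, with the surjectivity-spectrum analogue of \cite[Corollary~3.13]{ZZZ} in place of the approximate-point-spectrum version. The key step you isolate (surjectivity of $B-\lambda I$ together with SVEP of $B$ at $\lambda$ forces injectivity) is standard: a surjective operator is semi-regular, and for semi-regular operators SVEP at a point is equivalent to injectivity there.

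You are also right to flag the bidual issue in the literal adjoint route: applying Proposition~\ref{ppp} verbatim to $\begin{pmatrix}B^{*} & C^{*}\\ 0 & A^{*}\end{pmatrix}$ requires $(B^{*})^{*}=B^{**}$ to have the SVEP, which does not follow in general from $B$ having the SVEP. So the direct surjectivity argument is the honest reading of ``by duality'' here, and the adjoint shortcut should not be taken at face value without the reflexivity assumption or an independent justification.
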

From  Proposition \ref{ppp} and Proposition \ref{pppp}, we have
\begin{corollary}\cite{ZZ}
Let $(A,B)\in \mathcal{B}(X) \times\mathcal{B}(Y)$ such that $A^*$ and  $B$ have  the SVEP, then for every $C\in\mathcal{B}(Y, X)$, we have:
$$\sigma_{gD}(M_{C})=\sigma_{gD}(A)\cup\sigma_{gD}(B)$$
\end{corollary}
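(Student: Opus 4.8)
The plan is to derive this corollary directly from the two preceding propositions together with the decomposition of the generalized Drazin spectrum into its left and right parts. Recall that for any $T\in\mathcal{B}(X)$ we have $\sigma_{gD}(T)=\sigma_{lgD}(T)\cup\sigma_{rgD}(T)$, as recorded in the Introduction (from \cite{KMB}). Applying this to $T=M_C$ gives $\sigma_{gD}(M_C)=\sigma_{lgD}(M_C)\cup\sigma_{rgD}(M_C)$, so it suffices to compute each of the two pieces under the stated SVEP hypotheses.

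First I would invoke Proposition \ref{ppp}: since $A^*$ has the SVEP, for every $C\in\mathcal{B}(Y,X)$ we get $\sigma_{lgD}(M_C)=\sigma_{lgD}(A)\cup\sigma_{lgD}(B)$. Then I would invoke Proposition \ref{pppp}: since $B$ has the SVEP, for every $C\in\mathcal{B}(Y,X)$ we get $\sigma_{rgD}(M_C)=\sigma_{rgD}(A)\cup\sigma_{rgD}(B)$. Taking the union of these two identities and regrouping the terms by summand rather than by handedness,
$$\sigma_{gD}(M_C)=\bigl(\sigma_{lgD}(A)\cup\sigma_{rgD}(A)\bigr)\cup\bigl(\sigma_{lgD}(B)\cup\sigma_{rgD}(B)\bigr)=\sigma_{gD}(A)\cup\sigma_{gD}(B),$$
using once more $\sigma_{gD}=\sigma_{lgD}\cup\sigma_{rgD}$ applied to $A$ and to $B$. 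This closes the argument.

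There is essentially no obstacle at the level of this corollary; the real content sits in Propositions \ref{ppp} and \ref{pppp}, which in turn rest on the perturbation behaviour of $\sigma_{ap}$ and $\sigma_{su}$ for upper triangular operator matrices under the relevant SVEP assumption (the cited results of \cite{ZZZ}), combined with the characterizations $\lambda\in\sigma_{lgD}(T)\Longleftrightarrow\lambda\in acc(\sigma_{ap}(T))$ and $\lambda\in\sigma_{rgD}(T)\Longleftrightarrow\lambda\in acc(\sigma_{su}(T))$ and the fact that $acc$ distributes over finite unions. If one wished to be self-contained one could instead run the $acc(\sigma_{ap}(\cdot))$ and $acc(\sigma_{su}(\cdot))$ computations in parallel, but since both propositions are already available the shortest route is simply to quote them and take the union.
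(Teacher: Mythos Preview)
Your argument is correct and matches the paper's approach exactly: the paper simply states that the corollary follows from Propositions~\ref{ppp} and~\ref{pppp}, and your write-up makes explicit the union $\sigma_{gD}=\sigma_{lgD}\cup\sigma_{rgD}$ that glues them together.
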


From \cite[Proposition 3.1]{HZ}, if $A$ and $B$ have the SVEP, then $M_C$ has also SVEP. The following corollary hold.
\begin{corollary}
Let $A\in\mathcal{B}(X)$, $B\in\mathcal{B}(Y)$.
\begin{enumerate}
  \item If $A, A^*$ and $B$ have the SVEP, then for every $C\in\mathcal{B}(Y, X)$
  $$\sigma_{pBuF}(M_{C})=\sigma_{pBuF}(A)\cup\sigma_{pBuF}(B)$$
  \item If $B, B^*$ and $A$ have the SVEP, then for every $C\in\mathcal{B}(Y, X)$
  $$\sigma_{pBlF}(M_{C})=\sigma_{pBlF}(A)\cup\sigma_{pBlF}(B)$$
  \item If $A, A^*$  $B,$ and $B^*$ have the SVEP, then for every $C\in\mathcal{B}(Y, X)$
  $$\sigma_{pBF}(M_{C})=\sigma_{pBF}(A)\cup\sigma_{pBF}(B)$$
\end{enumerate}
\end{corollary}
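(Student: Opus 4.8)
The plan is to reduce all three assertions to the corresponding statements for the generalized Drazin spectra, where the upper triangular matrix formulas are already available; the bridge is Theorem \ref{t11} in the form $\sigma_{lgD}(T)=\sigma_{pBuF}(T)\cup S(T)$, Theorem \ref{t22} in the form $\sigma_{rgD}(T)=\sigma_{pBlF}(T)\cup S(T^*)$, and Corollary \ref{c1}, which collapses these unions to equalities as soon as the relevant SVEP holds. The remaining ingredients are the matrix identities Proposition \ref{ppp} and Proposition \ref{pppp}, together with \cite[Proposition 3.1]{HZ}, which guarantees that $M_C$ has the SVEP whenever $A$ and $B$ do.

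\medskip

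For (1): since $A$ and $B$ have the SVEP, \cite[Proposition 3.1]{HZ} gives $S(M_C)=\emptyset$, so Theorem \ref{t11} (equivalently Corollary \ref{c1}(1)) yields $\sigma_{pBuF}(M_C)=\sigma_{lgD}(M_C)$. As $A^*$ has the SVEP, Proposition \ref{ppp} gives $\sigma_{lgD}(M_C)=\sigma_{lgD}(A)\cup\sigma_{lgD}(B)$. Finally, applying Corollary \ref{c1}(1) to $A$ and to $B$ gives $\sigma_{lgD}(A)=\sigma_{pBuF}(A)$ and $\sigma_{lgD}(B)=\sigma_{pBuF}(B)$; concatenating the three equalities gives $\sigma_{pBuF}(M_C)=\sigma_{pBuF}(A)\cup\sigma_{pBuF}(B)$.

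\medskip

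For (2): I would run the right-handed mirror of this argument. Observe that $M_C^*$ is similar to an upper triangular matrix with diagonal blocks $A^*$ and $B^*$; since these have the SVEP, \cite[Proposition 3.1]{HZ} gives $S(M_C^*)=\emptyset$, so Theorem \ref{t22} yields $\sigma_{pBlF}(M_C)=\sigma_{rgD}(M_C)$. Since $B$ has the SVEP, Proposition \ref{pppp} gives $\sigma_{rgD}(M_C)=\sigma_{rgD}(A)\cup\sigma_{rgD}(B)$, and Corollary \ref{c1}(2), applied to $A$ and to $B$ (whose adjoints have the SVEP), gives $\sigma_{rgD}(A)=\sigma_{pBlF}(A)$ and $\sigma_{rgD}(B)=\sigma_{pBlF}(B)$; chaining these yields $\sigma_{pBlF}(M_C)=\sigma_{pBlF}(A)\cup\sigma_{pBlF}(B)$. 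Item (3) is then immediate from (1), (2) and the identity $\sigma_{pBF}(T)=\sigma_{pBuF}(T)\cup\sigma_{pBlF}(T)$, since the hypotheses of (3) contain all those used in (1) and (2): $\sigma_{pBF}(M_C)=\bigl(\sigma_{pBuF}(A)\cup\sigma_{pBlF}(A)\bigr)\cup\bigl(\sigma_{pBuF}(B)\cup\sigma_{pBlF}(B)\bigr)=\sigma_{pBF}(A)\cup\sigma_{pBF}(B)$.

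\medskip

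The only point requiring attention, and the most likely source of a mistake, is the bookkeeping of which operator and which adjoint must carry the SVEP at each step: Proposition \ref{ppp} uses the SVEP of $A^*$, \cite[Proposition 3.1]{HZ} uses the SVEP of $A$ and of $B$, the upper half of Corollary \ref{c1} uses the SVEP of the operator itself and the lower half that of its adjoint, and Proposition \ref{pppp} uses the SVEP of $B$. Reconciling this list with the stated hypotheses is routine for (1) and (3); for (2) it shows that what is genuinely needed is the SVEP of $A^*$ (so that $\sigma_{rgD}(A)=\sigma_{pBlF}(A)$ and so that $M_C^*$ has the SVEP), mirroring the role of $A^*$ in (1). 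Once the SVEP hypotheses are matched up, each item is a one-line chain of substitutions with no analytic content left to supply.
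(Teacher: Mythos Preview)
Your approach is exactly the one the paper has in mind: the corollary is stated there without an explicit proof, preceded only by the sentence invoking \cite[Proposition 3.1]{HZ} to transfer the SVEP from $A$ and $B$ to $M_C$; from there one is meant to use Corollary~\ref{c1} to collapse the pseudo semi B-Fredholm spectra to the corresponding generalized Drazin spectra, and then Propositions~\ref{ppp} and~\ref{pppp} to obtain the matrix identities. Your chain of substitutions for (1) and (3) reproduces this faithfully.

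Your closing remark about item (2) is also on target and worth making explicit. The dual argument requires the SVEP of $A^*$, not of $A$: one needs $A^*$ with SVEP both to apply Corollary~\ref{c1}(2) to $A$ (so that $\sigma_{pBlF}(A)=\sigma_{rgD}(A)$) and, together with the SVEP of $B^*$, to conclude via \cite[Proposition 3.1]{HZ} that $M_C^*$ has the SVEP (since $M_C^*$ is similar to an upper triangular matrix with diagonal entries $B^*,A^*$). The hypothesis ``$B,B^*$ and $A$ have the SVEP'' in the statement is thus a misprint for ``$B,B^*$ and $A^*$ have the SVEP''; with that correction your proof of (2) goes through verbatim, and the paper offers no separate argument that would rescue the statement under the weaker hypothesis on $A$ alone.
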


\begin{corollary}
Let $(A, B)\in \mathcal{B}(X)\times \mathcal{B}(Y)$ such that $A$  and $B$ are decomposable, then for every  $C\in\mathcal{B}(Y, X)$, we have:
$$\sigma_{pBF}(M_{C})=\sigma_{pBF}(A)\cup\sigma_{pBF}(B)$$
\end{corollary}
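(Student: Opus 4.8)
The plan is to reduce the statement to the already-established \cite[Proposition 3.1]{HZ} together with part (3) of the preceding corollary. The key observation is that a decomposable operator $S$ has the property that both $S$ and $S^*$ have the SVEP; this is recorded explicitly in the introduction of the paper. So if $A$ and $B$ are decomposable, then $A$, $A^*$, $B$, and $B^*$ all have the SVEP, and the hypothesis of part (3) of the previous corollary is satisfied verbatim. That corollary then yields $\sigma_{pBF}(M_C)=\sigma_{pBF}(A)\cup\sigma_{pBF}(B)$ for every $C\in\mathcal{B}(Y,X)$, which is exactly the claim.

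Concretely, I would structure the proof as follows. First, invoke the fact that decomposability of $A$ implies $A$ and $A^*$ have the SVEP, and likewise decomposability of $B$ implies $B$ and $B^*$ have the SVEP. Second, feed these four SVEP conditions into part (3) of the corollary immediately above (the one derived from \cite[Proposition 3.1]{HZ} combined with Corollary \ref{c1}), whose hypothesis is precisely ``$A$, $A^*$, $B$, and $B^*$ have the SVEP.'' Third, read off the conclusion. No computation with the operator matrix $M_C$ itself is needed at this level, since all the matrix-theoretic work has been absorbed into the cited results.

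I do not expect a genuine obstacle here, since the statement is essentially a specialization of the previous corollary to a stronger but cleaner hypothesis. The only point deserving a line of justification is the implication ``decomposable $\Rightarrow$ $S$ and $S^*$ have SVEP,'' which the paper has already noted in the introduction when discussing decomposable operators; I would simply cite that remark (or the standard reference \cite{lau}) rather than reprove it. If one wanted to be fully self-contained, one could alternatively recall that a decomposable operator admits, for any open cover $\{U_1,U_2\}$ of $\mathbb{C}$, a spectral decomposition whose local spectral subspaces are closed and hyperinvariant, from which SVEP for both $S$ and $S^*$ follows by the standard duality theory of decomposable operators; but for the purposes of this paper the one-line citation suffices.

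In short: decomposability gives the four SVEP hypotheses, and the four SVEP hypotheses give the conclusion via the preceding corollary. The proof is therefore a two-step deduction with the substantive content residing in the already-proved matrix result.
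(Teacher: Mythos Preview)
Your proposal is correct and matches the paper's intended argument exactly: the paper states this corollary without an explicit proof, relying precisely on the remark from the introduction that decomposability of $T$ implies both $T$ and $T^*$ have the SVEP, together with part (3) of the immediately preceding corollary. There is nothing to add.
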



\end{document}